\title{Roots in 3--manifold topology}
\author{C Hog-Angeloni}
\email{cyn@math.uni-frankfurt.de}
\urladdr{http://www.math.uni-frankfurt.de/~angeloni}
\author{S Matveev}
\address{Fachbereich Mathematik der Universit\"at Frankfurt\\\newline
Postfach 111932\\60054 Frankfurt\\Germany\vspace{3pt}\\\newline
Chelyabinsk State University\\129 Kashirin Brothers St.\\\newline
454021 Chelyabinsk\\Russia}
\email{matveev@csu.ru}
\urladdr{}
\dedicatory{}
\let\xysavmatrix\xymatrix
\def\xymatrix{\disablesubscriptcorrection\xysavmatrix}
\newtheorem{lemma}{Lemma}
\newtheorem{theorem}{Theorem}
\newtheorem{corollary}{Corollary}
\theoremstyle{remark}
\newtheorem{definition}{Definition}
\newtheorem{remark}{Remark}
\newtheorem*{Examplenn}{Example}
\newtheorem{Example}{Example}
\newtheorem{Conjecture}{Conjecture}
\def\p{\partial}
\def\N{\mathbb N}
\def\Cl{{\mbox{Cl}}}
\def\Int{\mbox{Int}}
\begin{document}

\begin{asciiabstract}
Let C be some class of objects equipped with a set of simplifying
moves. When we apply these to a given object M in C as long as
possible, we get a root of M.

Our  main result is that under certain conditions   the root  of
any object exists and is unique. We apply this result to different
situations and get several new results and new proofs of known
results. Among them there are a new proof of the Kneser-Milnor
prime decomposition theorem for 3-manifolds and different versions
of this theorem for cobordisms, knotted graphs, and orbifolds.
\end{asciiabstract}

\begin{htmlabstract}
<p class="noindent"> Let C be some class of objects equipped with a
set of <em> simplifying moves</em>. When we apply these to a given
object M&isin;C as long as possible, we get a <em>root</em> of M.
</p> <p class="noindent"> Our main result is that under certain
conditions the root of any object exists and is unique. We apply this
result to different situations and get several new results and new
proofs of known results. Among them there are a new proof of the
Kneser&ndash;Milnor prime decomposition theorem for 3&ndash;manifolds
and different versions of this theorem for cobordisms, knotted graphs,
and orbifolds.  </p>
\end{htmlabstract}

\begin{abstract}
Let $\cal C$ be some class of objects equipped with a set of {\em
simplifying moves}. When we apply these to a given object $M\in \cal
C$ as long as possible, we get a {\em root} of $M$.

Our  main result is that under certain conditions   the root  of
any object exists and is unique. We apply this result to different
situations and get several new results and new proofs of known
results. Among them there are a new proof of the Kneser--Milnor
prime decomposition theorem for 3--manifolds and different versions
of this theorem for cobordisms, knotted graphs, and orbifolds.
\end{abstract}

\maketitle

\section{Introduction} Let $\cal C$ be some class of
objects equipped with a set  of {\em simplifying moves}. When we apply
these to a given object $M\in \cal C$ as long as possible, we get a {\em
root} of $M$.

Our  main result is that under certain conditions   the root  of
any object exists and is unique. We apply this result to different
situations and get several new results and new proofs of known
results. Among them there are a new proof of the Kneser--Milnor
prime decomposition theorem for 3--manifolds and different versions
of this theorem for cobordisms, knotted graphs, and orbifolds.

We thank C~Gordon, W~Metzler, C~Petronio, W\,H Rehn and  S~Zentner
for useful discussions as well as the referee for his helpful comments.

Both authors are partially supported by the INTAS Project
``CalcoMet-GT" 03-51-3662 and the second author is also partially
supported by the RFBR grant 05-01-00293.

\section{Definition, existence and uniqueness of a root}
Let $\Gamma$ be an oriented graph and $e$ an   edge of $\Gamma$ with
initial vertex $v$ and terminal vertex $w$. We will call the transition
from $v$ to $w$ an \emph{edge move} on $v$.

 \begin{definition} A vertex $R(v)$ of $\Gamma$ is \emph{a root}
 of $v$, if the
following holds:
\begin{enumerate}
\item  $R(v)$ can be obtained from $v$ by edge moves.

\item  $R(v)$ admits no further edge moves.
\end{enumerate}
\end{definition}

Recall that a set  $A$ is \emph{well ordered}  if any subset of $A$ has a
least element. Basic examples are the set of non-negative integers $\N_0$
and its power $\N_0^k$ with lexicographical order.

\begin{definition} Let $\Gamma$  be an oriented graph with
vertex set $V(\Gamma)$ and $A$ a well ordered set. Then a map $c\colon
V(\Gamma)\to A$ is called a \emph{complexity function}, if for any edge
$e$ of  $\Gamma$ with vertices $v$, $w$ and orientation from $v$ to $w$ we
have $c(v)>c(w)$.
\end{definition}

\begin{definition}  Let $\Gamma$ be an oriented graph.
 Then two edges $e$ and $d$ of $\Gamma$ with the same initial
vertex $v$ are called \emph{elementary equivalent}, if their endpoints
have a common root. They are called \emph{equivalent} (notation: $e\sim
d$), if there is a sequence of edges $e=e_1, e_2,\ldots,e_n=d$ such that
the edges $e_i$ and $e_{i+1}$ are elementary equivalent for all $i, 1\leq
i <n$.
\end{definition}

\begin{definition} \label{AB} Let $\Gamma$ be an oriented graph.
We say that $\Gamma$ possesses property (CF) if it admits a  complexity
function. $\Gamma$ possesses property (EE) if any two edges of $\Gamma$
with common initial vertex are equivalent.
\end{definition}

It turns out that property (CF) guarantees existence, while
property (EE) guarantees uniqueness of the root.

\begin{theorem} \label{main} Let $\Gamma$ be an oriented graph
possessing properties (CF) and (EE).  Then any vertex has a unique
root.
\end{theorem}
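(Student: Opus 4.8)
The plan is to establish existence first, using property (CF), and then uniqueness, using property (EE). For existence, let me fix a complexity function $c\colon V(\Gamma)\to A$ into a well ordered set $A$. Starting from a vertex $v$, I would repeatedly apply edge moves: as long as the current vertex admits an outgoing edge, I follow it. The key observation is that along any sequence of edge moves $v=v_0\to v_1\to v_2\to\cdots$, the complexities strictly decrease, $c(v_0)>c(v_1)>c(v_2)>\cdots$, by the defining property of $c$. Since $A$ is well ordered, it contains no infinite strictly descending chain. Hence the process must terminate after finitely many steps at some vertex $R(v)$ that admits no further edge moves. This $R(v)$ is obtained from $v$ by edge moves and is a sink, so it is a root by definition. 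The main subtlety here is simply to note that a well ordered set admits no infinite strictly descending sequence; I would justify this quickly by observing that such a sequence, viewed as a subset, would have no least element, contradicting well ordering.

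**Uniqueness from (EE):**

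For uniqueness, suppose $v$ has two roots $R$ and $R'$. I want to show $R=R'$. I will argue by induction on the complexity $c(v)$, using the well ordering of $A$ to set up a minimal counterexample argument. Suppose, for contradiction, that some vertex admits two distinct roots, and among all such vertices pick one, say $v$, whose complexity $c(v)$ is least (possible since $A$ is well ordered). If $v$ itself admits no edge moves, then $v$ is its own unique root and there is nothing to prove; so $v$ has at least one outgoing edge. Let $e$ be the first edge on a move-sequence from $v$ realizing the root $R$, and $d$ the first edge on a sequence realizing $R'$, with endpoints $w$ and $w'$ respectively. Because $c(w)<c(v)$ and $c(w')<c(v)$, minimality of $v$ forces $w$ and $w'$ to have unique roots, and by construction these roots are $R$ and $R'$.

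The crucial step is to invoke property (EE): the edges $e$ and $d$ share the initial vertex $v$, so they are equivalent, meaning there is a chain $e=e_1,e_2,\ldots,e_n=d$ in which consecutive edges are elementary equivalent. Elementary equivalence of $e_i$ and $e_{i+1}$ means their endpoints share a common root; since each such endpoint has complexity strictly less than $c(v)$, by minimality each has a \emph{unique} root, so consecutive edges in the chain have endpoints with the \emph{same} unique root. Transporting equality of roots along the chain, the endpoint of $e_1=e$ and the endpoint of $e_n=d$ have the same root, i.e.\ $R=R'$, contradicting $R\neq R'$. I expect this transport along the equivalence chain to be the main obstacle: one must carefully check that the common root guaranteed by elementary equivalence coincides with the unique root of each endpoint, which is exactly where the minimality hypothesis $c<c(v)$ is used to upgrade ``a common root'' to ``the same root.'' Assembling the two parts completes the proof that every vertex has a unique root.
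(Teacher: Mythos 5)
Your proof is correct and takes essentially the same approach as the paper: existence comes from the well-ordering of the complexities of vertices reachable from $v$, and uniqueness from a least-counterexample argument that uses (EE) to chain together the (unique, by minimality) roots of the endpoints of the intermediate edges. The only cosmetic difference is that the paper phrases existence as picking a reachable vertex of minimal complexity rather than as termination of a strictly descending chain; the two formulations are interchangeable.
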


\begin{proof}{\bf Existence}\qua Let $v$ be a vertex of $\Gamma$.
Denote by $X$ the set of all vertices of $\Gamma$ which can be obtained
from $v$ by edge moves. By property (CF), there is a complexity function
$c\colon V(\Gamma)\to A$. Since $A$ is well ordered, the set $c(X)$ has a
least element $a_0$. Then any vertex in $c^{-1}(a_0)$ is a root of $v$.

{\bf Uniqueness}\qua  Assume that
$v$ is a least counterexample, ie, $v$ has two different roots
$u\neq w$ and   $c(v)\leq c(v')$ for any vertex $v'$ having more
than one root. Let $e$ respectively $d$  be the first edge of an
oriented edge path from $v$ toward $u$ respectively $w$. By
property  (EE), we have a sequence $e=e_1, e_2,\ldots,e_n=d$ such
that the edges $e_i$ and $e_{i+1}$ are elementary equivalent for
all $i, 1\leq i <n$. Hence, their endpoints $v_i, v_{i+1}$ have a
common root $r_i$. As $c(v_i)<c(v)$ for all $i$, that root is in
fact unique. Thus $u=r_1 =\cdots =r_n =w$ which is a
contradiction.\end{proof}

  The following sections are devoted to applications of \fullref{main}.

\section{A simple proof of the Kneser--Milnor prime decomposition theorem}
\label{milnor} \label{S-roots}
\begin{definition} \label{df:sphereduction} Let   $S$ be a
2--sphere in the interior of a compact 3--manifold $M$. Then a {\em
spherical reduction} (or \emph{compression move})  of $M$ along $S$
consists in cutting $M$ along $S$ and attaching two balls to the two
2--spheres arising under the cut.
\end{definition}

Let us consider the three types of spherical reductions. If $S$ bounds a
3--ball in $M$, then the reduction of $M$ along $S$ is {\em trivial}, ie,
it produces a copy of $M$ and a 3--sphere. If $S$ does not bound a 3--ball
and separates $M$ into two parts, then the reduction along $S$ produces
two 3--manifolds $M_1, M_2$ such that $M=M_1\# M_2$,  the connected sum of
$M_1, M_2$. If $S$ does not separate $M$, then the reduction along $S$
produces a 3--manifold $M_1$ such that $M=M_1\# S^2{\times}S^1$ or
$M=M_1\# S^2 {\tilde \times} S^1$ (the latter is possible only if $M$ is
non-orientable).

Recall that a 3--manifold  $M$ is prime if it is not a connected
sum of two 3--manifolds different from $S^3$. Also, $M$ is
irreducible, if it admits no non-trivial spherical reductions.

\begin{theorem}[Kneser--Milnor prime decomposition
 \cite{Kn,Mil}]\label{Milnor}
 Any closed orientable 3--manifold $M$ is a connected sum of prime factors.
The  factors   are determined uniquely up to homeomorphism.
\end{theorem}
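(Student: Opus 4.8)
The plan is to derive the Kneser–Milnor theorem as a direct application of \fullref{main}. I would set up an oriented graph $\Gamma$ whose vertices are homeomorphism classes of closed orientable 3--manifolds, with a directed edge $M \to M'$ whenever $M'$ is obtained from $M$ by a single nontrivial spherical reduction. Since we restrict to orientable manifolds, the relevant reductions produce either $M = M_1 \# M_2$ (separating case) or $M = M_1 \# S^2 \times S^1$ (nonseparating case). A root in this graph is then precisely an irreducible manifold, and the fact that a manifold with no further edge moves is a connected sum of its root with copies of primes follows by tracking the reductions. The whole argument reduces to verifying that $\Gamma$ satisfies properties (CF) and (EE).

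For existence (property (CF)), I would exhibit a complexity function into a well ordered set. The natural candidate is to count something that strictly drops under each nontrivial reduction: for instance the number of disjoint non-parallel essential 2--spheres, or more robustly an invariant built from the rank of $H_2(M;\mathbb{Z})$ together with a count of prime factors. One must check that each nontrivial spherical reduction strictly decreases this value and that it takes values in a well ordered set such as $\mathbb{N}_0$ or $\mathbb{N}_0^k$ with lexicographical order. Establishing that the complexity is well-defined and genuinely drops requires Kneser's finiteness result bounding the number of disjoint essential spheres, so this step quietly carries the geometric content of Kneser's theorem.

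For uniqueness (property (EE)), I would show that any two reductions from the same manifold $M$ are equivalent in the sense of the third definition, that is, their outcomes have a common root after further reductions. The key geometric input is an innermost-disk / general-position argument: given two 2--spheres $S$ and $S'$ realizing two edges out of $M$, I would isotope them to intersect transversely, then remove intersection circles one at a time by surgering along innermost disks, producing an intermediate manifold whose root is common to the two original outcomes. This lets me interpolate between the two reductions by a chain of elementary equivalent edges, which is exactly property (EE).

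The hard part will be the uniqueness step: making the innermost-disk surgery argument yield \emph{elementary} equivalence at each stage, rather than just an eventual common root, so that the sequence $e = e_1, \dots, e_n = d$ required by property (EE) actually exists. Reconciling the separating and nonseparating cases within a single equivalence chain—especially when one sphere is separating and the other is not—is where the care is needed, and it is there that the classical Kneser–Milnor disk-exchange lemmas must be invoked. Once (CF) and (EE) are in hand, \fullref{main} delivers both existence and uniqueness of the root, and unwinding the reduction sequence recovers the prime decomposition with its factors determined up to homeomorphism.
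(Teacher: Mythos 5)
Your overall strategy matches the paper's: build the graph of spherical reductions, verify (CF) via Kneser's finiteness and (EE) via an innermost-disk induction, and invoke \fullref{main}. But there are two genuine gaps. First, your vertex set is wrong as stated: if vertices are simply homeomorphism classes of closed orientable 3--manifolds, property (EE) fails and the root is \emph{not} unique. The paper's Remark gives the counterexample $S^2{\times}S^1\# S^2{\times}S^1$, which would have two distinct roots, $S^3$ and $S^3\sqcup S^3$, depending on whether one first reduces along a separating or a non-separating sphere. You must take vertices up to discarding $S^3$ components; this convention is also what rescues the base case of the (EE) induction when one sphere becomes trivial after reducing along the other.

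Second, and more seriously, your final sentence (``unwinding the reduction sequence recovers the prime decomposition'') hides the real remaining work. The root $R(M)$ is only the disjoint union of the \emph{irreducible} prime factors; every $S^2{\times}S^1$ summand is destroyed by a non-separating reduction and leaves no trace in $R(M)$, which is in any case only defined up to removal of spherical components. To reach the Kneser--Milnor statement you must still show that the number $n$ of $S^2{\times}S^1$ summands is an invariant of $M$, independent of the reduction sequence. The paper does this by a separate homological argument: $n$ equals the $Z_2$--rank of the subgroup $\Sigma(M)\subset H_2(M;Z_2)$ of spherical classes, using that separating reductions preserve this rank, non-separating ones drop it by one, and the Sphere Theorem forces the rank of the root to vanish. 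Without some such argument your proof establishes only the paper's root theorem (\fullref{one}), not the full prime decomposition theorem. A minor further point: one of your candidate complexity functions, a ``count of prime factors,'' is circular in this context; the paper instead uses the maximal length of a chain of non-trivial reductions, which Kneser's Lemma makes well defined.
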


The same holds for compact 3--manifolds with boundary. For non-orientable
3--manifolds the Kneser--Milnor theorem is also true, but with the following
modification: the factors are determined uniquely up to homeomorphism and
replacement of the direct product $ S^2{\times}S^1$ by the skew
product $S^2 {\tilde \times S^1} $ and vice versa. Note that these two
products are the only 3--manifolds that are prime, but reducible.

To get into the situation of \fullref{main}, we introduce an oriented
graph $\Gamma$. The set of vertices of $\Gamma$ is defined to be the
set of all compact 3--manifolds considered up to homeomorphism and
removing all connected components homeomorphic to $S^3$. Two manifolds
$M_1,M_2$ are joined by an oriented edge from $M_1$ to $M_2$ if and
only if the union of non-spherical components of $M_2$ can be obtained
from $M_1$ by a non-trivial spherical reduction and removing spherical
components.

Our next goal is to prove that any vertex of $\Gamma$ has a unique
root.

\begin{remark}
Our choice of vertices of $\Gamma$ is important. If we did not neglect
spherical components, then the root would not be unique. For example, the
manifold $S^2{\times}S^1\# S^2{\times}S^1$ would have two
different roots: a 3--sphere and the union of two disjoint 3--spheres.
\end{remark}

In order to construct a complexity function, we need Kneser's
Lemma\footnote{Here is Kneser's original statement of the lemma: Zu jeder
$M^3$ gehoert eine Zahl $k$ mit der folgenden Eigenschaft: Nimmt man mit
$M^3$ nacheinander $k +1$ Reduktionen vor, so ist mindestens eine davon
trivial. Durch $k$ (oder weniger) nicht triviale Reduktionen wird $M^3$ in
eine irreduzible $M^3$ verwandelt.}, see~\cite{Kn}.

\begin{lemma} \label{boundg}
For any compact 3--manifold $M$ there exists an integer
constant $C_0$ such that
  any sequence of non-trivial   spherical reductions
consists of no more than $C_0$ moves.\endproof
\end{lemma}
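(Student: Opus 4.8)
The plan is to fix a triangulation $T$ of $M$, say with $t$ tetrahedra, and to bound $C_0$ in terms of $t$ alone by converting a long sequence of non-trivial spherical reductions into a large system of disjoint, pairwise non-parallel, essential $2$--spheres in $M$, at which point the classical Kneser--Haken finiteness estimate from normal surface theory furnishes the bound.

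First I would record each reduction by its splitting sphere. A non-trivial reduction cuts $M$ along a sphere $S_1$ that bounds no ball and caps the two new boundary spheres with balls $B_1',B_1''$; the next sphere $S_2$ lives in the resulting manifold $M_1$. By innermost-disk surgery inside the caps I can isotope $S_2$ off $B_1'\cup B_1''$, so that $S_2$ becomes a sphere in $M$ disjoint from $S_1$; iterating, a sequence of $k$ reductions yields disjoint spheres $S_1,\dots,S_k\subset M$. I would then verify two things. (i) None of the $S_i$ bounds a ball in $M$: a ball in $M$ bounded by $S_i$ would, after the earlier caps are inserted, still bound a ball at stage $i$, contradicting non-triviality of that reduction. (ii) The $S_i$ may be taken pairwise non-parallel: if an adjacent pair cobounded a product region $S^2\times I$, then capping the inner sphere turns the later one into the boundary of a ball, again making its reduction trivial. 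Thus the $S_i$ form a disjoint family of $k$ essential, pairwise non-parallel $2$--spheres in $M$.

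The heart of the argument --- and the main obstacle --- is to bound the size of such a family independently of the reductions, using only $T$. Here I would invoke normal surface theory: isotope the whole family into normal form so that it minimises intersection with the $1$--skeleton and meets each tetrahedron in normal triangles and quadrilaterals. Cutting $M$ along the family, the complementary regions fall into \emph{product regions} $S^2\times I$ (stacks of parallel normal disks) and genuinely \emph{non-product} pieces that carry part of the $1$--skeleton. The number of non-product pieces is linear in $t$. Since no two $S_i$ are parallel and none bounds a ball, two distinct spheres cannot be cobounded by a product region, so each product region abuts at most one sphere of the family; a short combinatorial count then bounds the total number of spheres linearly in $t$. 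Setting $C_0$ equal to this bound finishes the proof. The delicate points on which I expect to spend the most effort are the normalisation step --- making the family normal without creating parallel copies or ball-bounding components --- and the bookkeeping that converts the structure of the complementary regions into an explicit linear estimate.
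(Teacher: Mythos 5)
The paper gives no proof of this lemma at all: it is stated as Kneser's classical finiteness result, with only the citation to \cite{Kn} and Kneser's original wording in a footnote. The closest thing to a proof in the paper is the argument for the knotted-graph analogue, \fullref{boundgraph}, and your sketch follows exactly that classical route: convert the reduction sequence into a disjoint family of pairwise non-parallel spheres none of which bounds a ball, normalize with respect to a triangulation, and count triangle and quadrilateral patches per tetrahedron (the paper uses the bound $10t$ there). Your reduction of the lemma to bounding such a family is correct and your justifications of (i) and (ii), via Alexander's theorem inside the putative balls and product regions, are sound.

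The step I would not accept as written is the final count. The claim that ``each product region abuts at most one sphere of the family'' has the logic inverted, and it hides a genuine case. A complementary chamber built entirely of product blocks $P\times I$ is an $I$--bundle over a closed surface, and the dichotomy is: (a) a product $S^2\times I$ whose two horizontal boundary spheres lie on \emph{different} members of the family --- this makes those members parallel, which is the contradiction you want; or (b) a \emph{twisted} $I$--bundle over $\mathbb{R}P^2$, whose single boundary sphere bounds a punctured $\mathbb{R}P^3$. A sphere of type (b) bounds no ball and is parallel to no other sphere, so it legitimately belongs to your family while contributing only product blocks, and your linear-in-$t$ bookkeeping does not see it. The standard repairs are either to enlarge the bound by a term such as $\dim H_1(M;\mathbb{Z}_2)$ (each such chamber forces a distinct $\mathbb{R}P^3$ summand), or to argue as in the paper's proof of \fullref{boundgraph}: with enough spheres one finds a chamber whose boundary contains both a white and a non-white sphere, and such a chamber is necessarily a product between two \emph{distinct} members of the family. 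With that correction your argument is the classical Kneser proof and is complete.
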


\begin{corollary} \label{comfun} $\Gamma$ possesses property (CF).
\end{corollary}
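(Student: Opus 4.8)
The plan is to build the required complexity function directly out of the bound supplied by Kneser's Lemma (\fullref{boundg}). For a compact 3--manifold $M$, define $c(M)$ to be the maximal length of a sequence of non-trivial spherical reductions that can be carried out starting from $M$. By \fullref{boundg} every such sequence has length at most the constant $C_0$ associated to $M$, so the set of attainable lengths is a nonempty subset of $\{0,1,\ldots,C_0\}$ and therefore has a maximum. Hence $c(M)$ is a well-defined element of $\N_0$, which is well ordered, giving a map $c\colon V(\Gamma)\to\N_0$.

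First I would check that $c$ genuinely descends to the vertex set of $\Gamma$, that is, that it is insensitive to the removal of $S^3$ summands by which vertices are identified. This is immediate: any 2--sphere lying inside an $S^3$ component bounds a ball and hence admits only a trivial reduction, so adjoining or deleting $S^3$ components alters neither the collection of available non-trivial reductions nor their maximal number. Thus $c$ is constant on the equivalence classes that constitute the vertices of $\Gamma$.

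Next I would verify the defining monotonicity. Suppose $\Gamma$ has an oriented edge from $M_1$ to $M_2$; by construction the non-spherical part of $M_2$ is obtained from $M_1$ by a single non-trivial spherical reduction (together with deletion of some $S^3$ components, irrelevant to $c$ by the previous paragraph). Consequently any maximal chain of non-trivial reductions starting from $M_2$ can be preceded by the reduction taking $M_1$ to $M_2$, producing a strictly longer admissible chain from $M_1$. Therefore $c(M_1)\geq c(M_2)+1>c(M_2)$, which is exactly the inequality demanded of a complexity function, and property (CF) follows.

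The only point requiring real care is the finiteness and well-definedness of $c$, and this is precisely what \fullref{boundg} furnishes; once the maximal reduction length is known to be a finite non-negative integer, the strict decrease along edges is essentially formal. I do not anticipate any genuine obstacle beyond correctly recording that $S^3$ components contribute nothing, which keeps $c$ well defined on the quotient vertex set.
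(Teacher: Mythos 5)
Your proposal is correct and follows exactly the paper's argument: the paper defines $c(M)$ as the maximal number of non-trivial spherical reductions in any sequence, invokes Kneser's Lemma (\fullref{boundg}) for well-definedness, and observes compatibility with the orientation. You merely spell out the two details the paper leaves as ``evident'' (invariance under discarding $S^3$ components and the strict decrease $c(M_1)\geq c(M_2)+1$ along edges), both of which you handle correctly.
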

\begin{proof} We define the  complexity function $c\colon V(\Gamma) \to
\N_0$ as follows: $c(M)$ is the maximal number of spherical reductions in
any sequence of non-trivial spherical reductions of $M$. By \fullref{boundg},
$c$ is well defined and evidently it is compatible with the
orientation on $\Gamma$.
\end{proof}

\begin{lemma} \label{edgequiv} $\Gamma$ possesses property (EE).
\end{lemma}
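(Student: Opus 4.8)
The plan is to establish property (EE) by proving the following local confluence statement: if $M\to M_1$ and $M\to M_2$ are the non-trivial spherical reductions along 2--spheres $S_1$ and $S_2$ defining two edges $e_1,e_2$ of $\Gamma$ with common initial vertex $M$, then $e_1\sim e_2$. The main simplification is that, by \fullref{comfun} together with the existence half of the proof of \fullref{main} (which uses only property (CF)), every vertex of $\Gamma$ has a root; hence to see that two endpoints are elementary equivalent it is enough to produce a single vertex that both of them reach by edge moves, for any root of that vertex is then a common root. I would put $S_1$ and $S_2$ in general position, so that $S_1\cap S_2$ is a finite set of circles, and argue by induction on their number $n$.

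The base case $n=0$, which I would isolate as a separate claim about any two \emph{disjoint} non-trivial spheres, is where the essential idea and the main difficulty lie. If $S_1,S_2$ are disjoint then $S_2$ survives the reduction along $S_1$ and vice versa, so the two reductions commute and both orders lead to the manifold $M_{12}$ obtained by reducing $M$ simultaneously along $S_1\cup S_2$ and discarding spherical components. Then $M_1$ and $M_2$ each reach $M_{12}$ by edge moves, so they share a root and $e_1,e_2$ are elementary equivalent. The delicate point is the bookkeeping of degenerate situations: a sphere that was non-trivial in $M$ may bound a ball after the first reduction (its complementary ball having swallowed one of the balls used to cap $S_1$), so that the second reduction is trivial. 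Here it is precisely the convention that the vertices of $\Gamma$ ignore $S^3$ summands (see the preceding remark) that lets one still identify a common vertex reached by edge moves, possibly by zero moves; checking all such cases is the part of the argument that requires the most care.

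For the inductive step $n>0$ I would use the standard innermost--disk surgery. Pick a circle of $S_1\cap S_2$ innermost on $S_2$, bounding a disk $D\subset S_2$ with interior disjoint from $S_1$, and let $C=\partial D$ cut $S_1$ into disks $P$ and $Q$. Surgering $S_1$ along $D$ and pushing the two copies of $D$ off both $S_1$ and $S_2$ yields spheres $A=P\cup D$ and $B=Q\cup D$, each disjoint from $S_1$ and meeting $S_2$ in strictly fewer than $n$ circles. At least one of $A,B$ is non-trivial, say $A$, since $S_1$, $A$ and $B$ cobound a region homeomorphic to $S^3$ with three open balls removed, so if both $A$ and $B$ bounded balls then $S_1$ would bound a ball as well, contrary to $e_1$ being an edge. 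The reduction along $A$ then defines an edge $e_A$ of $\Gamma$ from $M$. Since $A$ is disjoint from $S_1$, the disjoint case gives $e_1\sim e_A$; since $A$ meets $S_2$ in fewer than $n$ circles, the inductive hypothesis gives $e_A\sim e_2$. Combining, $e_1\sim e_2$, which completes the induction and proves property (EE).
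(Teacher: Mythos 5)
Your proposal is correct and follows essentially the same route as the paper: reduce elementary equivalence to exhibiting a common vertex reachable from both endpoints, handle disjoint spheres by commuting the two reductions (with the convention of discarding $S^3$ components absorbing the degenerate cases where one reduction becomes trivial), and induct on the number of intersection circles via innermost-disk surgery on $S_1$ along a disk of $S_2$. The only minor variation is that the paper first isotopes the spheres to minimize $m$ and invokes that minimality to conclude both surgered spheres are non-trivial, whereas you drop the minimality assumption and instead use the connected-sum observation ($S_1$ is the connected sum of $A$ and $B$, so they cannot both bound balls) to get at least one non-trivial sphere, which is all the induction needs.
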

\begin{proof} Let $S_e ,S_d$ be two non-trivial spheres in
$M$ and  $e, d$ be the corresponding edges of $\Gamma$. We  prove
 the equivalence $e\sim d$ by induction on the number
 $m=\# (S_e\cap S_d)$ of curves
 in the intersection assuming that the  spheres have been
 isotopically shifted so that $m$ is minimal.

 {\bf Base of induction}\qua  Let $m=0$, ie~$ S_e\cap S_d=\emptyset$.
 Denote by $M_e,M_{d}$ the manifolds obtained by reducing $M$
 along $S_e,S_d$, respectively. Since $ S_e\cap S_d=\emptyset$,
 $S_d$ survives the reduction along $S_e$ and thus may be considered as
 a sphere in $M_e$. Let $N$ be obtained by reducing
 $M_e$ along $S_d$. Of course,
 compression of $ M_{d} $  along  $S_e $
 also gives $ N $.   We claim that any root $R$ of $N$ is a
 common root of  $ M_{e} $  and $ M_{d}$ (and hence the edges
 $e,d$ are
 elementary equivalent).
Indeed,   if the sphere $S_d$ is non-trivial in $M_e$, then $R$ is a root
of $M_e$ by the definition of a root. If $S_d$ is trivial in $M_e$, then
$N=M_e\cup S^3$ and the manifolds $M_e,N$ determine the same vertex of
$\Gamma$, so again $R$ is a root of $M_e$. Symmetrically, $R$ is a root of
$M_d$ whether or not $S_e$ is trivial in $M_d$.\\
Therefore,  $R$ is a root of $M_e$ and $M_d$.

{\bf Inductive step}\qua Suppose that $m>0$. Using an innermost circle
argument, we find a disc $D\subset S_d$ such that $D\cap S_e=\p D$ and
compress $S_e$ along $D$.  By minimality of $m$, we get two non-trivial
spheres $S',S''$, each disjoint to $S_e$ and intersecting $S_d$ in a
smaller number of circles, see \fullref{maintrick}.
Taking one of them
(say, $S'$) and denoting the corresponding edge by $e'$, we get $e \sim
e'$ and $e'\sim d$ by the inductive assumption. Therefore, $e\sim
d$.\end{proof}

\begin{figure}[ht]
\labellist
\small
\pinlabel {$S'$} [l] at 150 673
\pinlabel {$S_d$} at 260 685
\pinlabel {$D$} at 262 593
\pinlabel {$S_e$} [b] at 350 640
\pinlabel {$S''$} [r] at 372 608
\endlabellist
\centerline{\includegraphics[height=5.5cm]{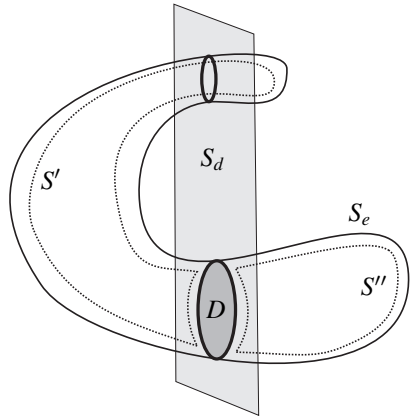}}
  \caption{Removing intersections}
  \label{maintrick}
\end{figure}

\begin{theorem} \label{one} For any 3--manifold $M$, the root $R(M)$ under
spherical reduction exists and is unique up to homeomorphism and removal
of spherical  components.
\end{theorem}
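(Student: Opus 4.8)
The plan is to invoke Theorem~\ref{main} directly. By this point the excerpt has done all the structural work: we have an oriented graph $\Gamma$ whose vertices are compact 3--manifolds modulo homeomorphism and removal of $S^3$--components, with edges corresponding to non-trivial spherical reductions. Theorem~\ref{main} guarantees a unique root for any vertex of an oriented graph possessing both (CF) and (EE). So the entire proof reduces to verifying that $\Gamma$ satisfies these two hypotheses and then translating the conclusion back into the language of spherical reductions.

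First I would cite Corollary~\ref{comfun}, which establishes that $\Gamma$ possesses property (CF): the complexity $c(M)$, defined as the maximal length of a sequence of non-trivial spherical reductions, is well defined by Kneser's Lemma (Lemma~\ref{boundg}) and strictly decreases along every edge. Next I would cite Lemma~\ref{edgequiv}, which establishes property (EE): any two edges of $\Gamma$ with a common initial vertex are equivalent, proved by the innermost-disc induction on the number of intersection circles between the two reducing spheres. With both (CF) and (EE) in hand, Theorem~\ref{main} applies verbatim.

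Applying Theorem~\ref{main} to an arbitrary vertex $M$ of $\Gamma$ yields a unique root $R(M)$. I would then unwind the definitions to phrase this in the theorem's own terms. A root of $M$ in $\Gamma$ is, by Definition~1, a vertex reachable from $M$ by edge moves that admits no further edge moves; an edge move in $\Gamma$ is precisely a non-trivial spherical reduction followed by discarding $S^3$--components, and admitting no further edge move means admitting no non-trivial spherical reduction, i.e.\ being irreducible (up to spherical components). Since vertices of $\Gamma$ are 3--manifolds taken up to homeomorphism and removal of spherical components, the uniqueness furnished by Theorem~\ref{main} is exactly uniqueness up to homeomorphism and removal of spherical components, which is the assertion of the theorem.

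The only subtlety — and it is minor rather than a genuine obstacle — is the bookkeeping about spherical components that the Remark after the construction flagged. One must be careful that the passage between ``3--manifold'' and ``vertex of $\Gamma$'' is exactly the identification used to set up $\Gamma$, so that ``root under spherical reduction'' in the statement of Theorem~\ref{one} coincides with ``root in $\Gamma$'' in the sense of Definition~1. Because the vertex set was defined precisely to neglect $S^3$--components, this identification is immediate and the abstract machinery of Section~2 transfers with no further analysis; all the real content lives in Lemmas~\ref{boundg} and~\ref{edgequiv}, which have already been proved.
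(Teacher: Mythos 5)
Your proposal is correct and follows exactly the paper's route: the paper's proof is simply ``This follows from Theorem~\ref{main}'', relying on Corollary~\ref{comfun} for (CF) and Lemma~\ref{edgequiv} for (EE), just as you do. Your additional remarks about unwinding the definitions and the bookkeeping of spherical components are accurate elaborations of what the paper leaves implicit.
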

\begin{proof} This follows from \fullref{main}.
\end{proof}

Note that $R(M)$ is the disjoint union of the irreducible factors of $M$,
ie,~of the prime factors other than $S^2{\times} S^1$ or
$S^2{\tilde \times} S^1$. So the root appears  more natural than the
Kneser--Milnor decomposition and the same is true for other applications of
\fullref{main},
see the following sections. In order to get the full
version of the prime decomposition theorem, additional efforts are needed.
The advantage of this two-step method is that the first step (existence
and uniqueness of roots) is more or less standard, while all individual
features appear only at the second step.

In our situation, note that the number $n$ of factors $S^2{\times}S^1$
is determined by $M$.  Let $M$ be orientable.  Denote by $\Sigma(M)$
the subgroup $\Sigma(M)\subset H_2(M;Z_2)$ consisting of all spherical
elements (that is, of elements of the form $f_\ast(\mu)$, where $\mu$
is the generator of $H_2(S^2;Z_2)$ and $f\colon
S^2\to M$ is a map). Let us show that $n$ coincides with the rank
$r(M)$ of $\Sigma(M)$ over $Z_2$. This follows from the following
observations.
\begin{enumerate}
\item Spherical reductions along separating spheres
preserve $\Sigma(M)$ and $r(M)$ while any spherical reduction along a
non-separating sphere decreases $r(M)$ by one and kills one $S^2{\times}S^1$
factor.
\item After performing all possible spherical reductions,
we get a 3--manifold $M'$, which is aspherical (according to the Sphere
Theorem) and thus has $r(M')=0$.
\end{enumerate}

We thus easily deduce the Kneser--Milnor theorem from \fullref{one}:
$M$ is the connected sum of the connected components of $R(M)$ plus $n$
factors $S^2{\times}S^1$. The  factors   are determined uniquely up to
homeomorphism.

\section{Decomposition into boundary connected sum}

\label{D-roots} By a {\itshape disc reduction} of a 3--manifold $M$ we mean
cutting $M$ along a proper disc $D$. If $\p D$ bounds a disc in $\p M$,
then the reduction is called {\em trivial}. It makes little sense to
consider disc reductions separately from spherical ones. Usually one uses
them together or considers only irreducible manifolds. We start with the
first approach. By an $SD$--reduction we will mean a spherical or a disc
reduction.

As in \fullref{S-roots},
we begin by introducing an oriented
graph $\Gamma$. The set of vertices of $\Gamma$ is defined to be the set
of all compact 3--manifolds, but considered up to homeomorphism and
removing connected components homeomorphic to $S^3$ or $D^3$. Two vertices
$M_1,M_2$ are joined by an edge oriented from $M_1$ to $M_2$ if $M_2$ can
be obtained from $M_1$ by a non-trivial $SD$--reduction.

\begin{lemma} \label{edgequivD} $\Gamma$ possesses properties (CF) and
 (EE).
\end{lemma}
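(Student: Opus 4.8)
The plan is to adapt the two-lemma structure that just worked for the purely spherical case (Corollary~\ref{comfun} and Lemma~\ref{edgequiv}) to the combined $SD$--setting. For property (CF), I would extend Kneser's Lemma to $SD$--reductions: I expect that any sequence of non-trivial spherical or disc reductions of a fixed compact $M$ is bounded in length by some constant $C_0'$. The intuition is that each disc reduction either splits off a boundary--connected--sum factor or lowers the genus of a boundary component, and each such event can happen only finitely often once we have also bounded the number of spherical reductions. Setting $c(M)$ to be the maximal length of any sequence of non-trivial $SD$--reductions then gives a complexity function valued in $\N_0$ that is strictly decreasing along each edge, establishing (CF) exactly as in Corollary~\ref{comfun}.

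For property (EE), I would mimic the induction on $m=\#(S_e\cap S_d)$ from Lemma~\ref{edgequiv}, but now the two reducing surfaces can each be a sphere or a proper disc. The base case $m=0$ is essentially unchanged: when the surfaces are disjoint, the second survives the first reduction, the reductions commute, and any root $R$ of the doubly reduced manifold $N$ is a common root of $M_e$ and $M_d$ (using that a trivial reduction only adds an $S^3$ or $D^3$ component, which the vertices of $\Gamma$ ignore). The inductive step again uses an innermost circle argument: I take an innermost disc $D$ of intersection cut out on one surface by the other and compress along it, producing two surfaces $S',S''$ each disjoint from the compressed surface and meeting the other in fewer circles, so that $e\sim e'\sim d$ follows by induction.

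The main obstacle will be the bookkeeping in the inductive step caused by the extra cases. When I compress a proper disc along an innermost disc of its intersection with a sphere (or compress a sphere along a subdisc of a proper disc), I have to check that the resulting pieces $S',S''$ are again admissible reducing surfaces of $\Gamma$ --- i.e.\ spheres or proper discs --- and that at least one of them is \emph{non-trivial}, so that it actually corresponds to an edge. In particular, the intersection of two proper discs may consist of arcs as well as circles, so ``innermost circle'' must be replaced by an innermost arc or circle argument, and cutting a disc along an arc yields two discs rather than a sphere and a disc. I would handle this by first isotoping to remove all arc intersections whose endpoints could be pushed off $\p M$, then treating the remaining circle intersections exactly as before; the sphere--sphere and sphere--disc subcases reduce verbatim to the argument already given, while the disc--disc subcase is the genuinely new piece requiring care.

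Once both properties are verified, the conclusion is immediate: \fullref{main} applies to $\Gamma$, so every compact 3--manifold has a unique root under $SD$--reductions, which is the decomposition into the boundary--connected--sum of irreducible, boundary--irreducible factors together with the $S^2\times S^1$ and $D^2\times S^1$ summands handled as in the spherical case.
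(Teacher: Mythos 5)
Your plan for (EE) breaks down exactly where the paper's proof does most of its work: the base case $m=0$ when both $F_e$ and $F_d$ are discs. You justify that case by asserting that a trivial reduction ``only adds an $S^3$ or $D^3$ component'', but this is false for disc reductions in a reducible manifold, and irreducibility cannot be assumed here (that hypothesis belongs to \fullref{twoprime}, not to this lemma). Concretely, two disjoint discs $F_e,F_d$, both non-trivial in $M$, can be such that $\partial F_e$ bounds a disc $D$ in $\partial M_d$ after the cut along $F_d$; cutting $M_d$ along the now-trivial $F_e$ then splits off the region between $F_e$ and $D$, which need not be a ball, so $N$ need not represent the same vertex of $\Gamma$ as $M_d$ and there need not be an edge from $M_d$ to $N$ at all. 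The paper repairs this by replacing the trivial disc reduction with a spherical reduction along the sphere $S$ built from $F_e$ and $D$, and then distinguishes whether $D$ contains one or both copies $F_d^{\pm}$ of $F_d$: in the first case $M_e$ and $M_d$ reduce along $S$ to the same manifold $N'$, and in the second case $F_d$ is still non-trivial in $M_e$ and reducing $N$ along $S$ gives $N'$ plus a spare ball. Without this case analysis the elementary equivalence of $e$ and $d$ is not established. Your inductive step (innermost circles plus outermost arcs for disc--disc intersections) does match the paper's.

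On (CF) you take a genuinely different route: a single integer complexity equal to the maximal length of an $SD$--reduction sequence, which requires an $SD$--analogue of Kneser's Lemma that you only assert (``I expect that\dots''). The paper avoids proving any new finiteness statement by using the lexicographically ordered pair $c(M)=(g^{(2)}(\partial M), s(M))$, where $g^{(2)}$ is the sum of the \emph{squares} of the genera of the boundary components --- the squares are needed precisely because a non-trivial disc reduction along a separating curve preserves the total genus while splitting a component --- and $s(M)$ is the spherical complexity of \fullref{comfun}. Your version could be made to work (each sequence contains at most $g^{(2)}(\partial M)$ disc reductions, and $s$ does not increase under them), but as written it is an unproved claim, and ``lowers the genus of a boundary component'' is not the correct monotone quantity.
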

\begin{proof}
Let $M$ be a 3--manifold. We assign to it two integer numbers
$g^{(2)}(\partial M)$ and $s(M)$. The first number is equal to  $\sum
g^2(F)$, where $g(F)$ is the genus of a component $F \subset \partial M$,
and the sum is taken over all components  of $\partial M$. Note that
{\itshape any} non-trivial disc reduction  lowers $g^{(2)}(\partial M)$
while $g^{(1)}(\partial M)$, the sum of the genera of the components
$F\subset \partial M$, stays unchanged when $\partial D$ is  separating.
The second number $s(M)$ is the maximal number of non-trivial spherical
reductions on $M$ introduced in the previous section, where we denoted it
by $c(M)$. Now we introduce the complexity function $c\colon V(\Gamma) \to
\N^2$ by letting $c(M)=(g^{(2)}(\partial M), s(M))$, where the pairs are
considered in lexicographical order.
 It is clear that $c$ is compatible with the orientation of $\Gamma$.

For proving property (EE), we apply the same method as in the proof of
Lemma 2. Let $F_e,F_d$ be two surfaces such that each of them is either a
non-trivial sphere or a non-trivial disc. We will use the same induction on
the number $m$ of curves in $F_e\cap F_d$.

{\bf Base of induction}\qua Let $m=0$ and let $M_e,M_d$ be the manifolds
obtained by reducing $M$ along $F_e$ and $F_d$, respectively.   We wish to
prove that $M_e$ and $M_d$ have a common root. Let $N$ be obtained by
reducing $M_e$ along $F_d$ as well as by reducing $M_d$ along $F_e$.

Suppose that either both $F_e\subset M_d$ and $F_d\subset M_e$ are
non-trivial, or both are spheres, or one of them is a sphere and the other
is a disc (in the latter case the disc is automatically non-trivial, since
spherical reductions do not affect the property of a disc to be
non-trivial). Then any root of $N$ is a common root of $M_e,M_d$ (the same
proof as in Lemma 2 does work). That covers all cases when at least one of
the surfaces is a sphere.

Suppose that both surfaces $F_e,F_d$   are discs such that
      $F_e$ is trivial in $M_d$.
Then the disc
    $D\subset \partial M_d$  bounded by
      $\partial F_e$   must contain at least one
      of the two copies $F_d^+,F_d^-$ of $F_d$   appeared under the
      cut.  Let $S\subset \Int M_d$ be a
    sphere which runs along $F_e$ and  $D $    such
     that $S\cap F_e=\emptyset$. The last condition guarantees us that
     $S$ may be considered as a sphere sitting  in $M$ as well as
     in $M_e$.  Now there are two possibilities.
\begin{enumerate}

\item
$D$ contains only one copy (say, $F_d^+$) of $F_d$
 (in this case the disc
 $F_d\subset M_e$ also is trivial).  Then the
 spherical reductions of $M_e$ and $M_d$ along $S$ produce the
 same manifold $N'$. So any  root of $N'$ is a common
 root of $M_e$ and
 $M_d$.

\item $D$ contains both copies   $F_d^+$ and $F_d^-$ of   $F_d$
 (in this case the disc
 $F_d\subset M_e$  is non-trivial). Note that $S$ survives the reduction
     along $F_e$ and thus may be considered as a sphere in $N$.
  Denote by $N' $ the manifold obtained from $M_d$ by spherical
  reduction along $S$. It is easy to see that the
  spherical reduction of  $N$ along $S$ produces a manifold homomorphic to the
   disjoint union of
  $N'$ and a 3--ball.  Since 3--balls are neglected, any root of
  $N'$ is a common root of  $M_e$ and
 $M_d$.
$$\xymatrix@!@-25pt{&&M\ar[ddll]_{F_e}\ar[ddrr]^{F_d}\\
          \\
          M_e\ar[dr]_{F_d}&&&&M_d\ar[ddll]^S\\
          &N\ar[dr]_S\\
          &&N'}$$
\end{enumerate}

The inductive step is performed exactly in the same way as in the proof of
Lemma 2.  The only difference is that in addition to an innermost circle
argument we use an outermost arc argument for decreasing the number of
arcs in the intersection of discs.\end{proof}

\begin{theorem} \label{two} For any 3--manifold $M$
the $SD$--root $R(M)$ exists and is unique up to homeomorphism and removal
of spherical and ball connected components.
\end{theorem}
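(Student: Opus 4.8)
The plan is to obtain this statement as an immediate consequence of \fullref{main}, in exactly the way \fullref{one} was derived in the previous section. First I would recall the oriented graph $\Gamma$ constructed just above this theorem: its vertices are compact 3--manifolds taken up to homeomorphism and removal of connected components homeomorphic to $S^3$ or $D^3$, and $M_1$ is joined to $M_2$ by an oriented edge precisely when $M_2$ arises from $M_1$ by a non-trivial $SD$--reduction. By \fullref{edgequivD} this $\Gamma$ possesses both (CF) and (EE). Hence the hypotheses of \fullref{main} are met, and I would simply invoke it to conclude that every vertex of $\Gamma$ has a unique root.

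The remaining step is to translate this abstract conclusion back into the language of 3--manifolds. A vertex is an equivalence class of manifolds under homeomorphism and deletion of spherical and ball components, and a root is a vertex admitting no further edge move, ie~no non-trivial $SD$--reduction. Thus existence of a root means that $M$, after finitely many non-trivial $SD$--reductions, reaches a manifold admitting no non-trivial $SD$--reduction, while uniqueness of the root \emph{vertex} says exactly that this terminal manifold is determined uniquely up to homeomorphism and removal of spherical and ball connected components. This is precisely the assertion of the theorem, so no additional argument is required beyond reading off the meaning of ``vertex'' and ``root'' in $\Gamma$.

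Accordingly, I do not expect any genuinely hard step in this proof: all of the difficulty has already been absorbed into \fullref{edgequivD}, namely into the verification of (CF) through the lexicographic complexity $c(M)=(g^{(2)}(\partial M), s(M))$ and of (EE) through the innermost-circle and outermost-arc induction. The one point deserving a moment's care is to confirm that both properties really hold for this particular $\Gamma$ rather than for the purely spherical graph of \fullref{S-roots}, but that is exactly what \fullref{edgequivD} supplies. The present theorem is then just the harvest of that work via the abstract root theorem.
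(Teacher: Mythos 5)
Your proposal matches the paper's own proof exactly: the paper simply cites \fullref{main} together with \fullref{edgequivD}, and your unpacking of what ``vertex'' and ``root'' mean in this graph is a correct (if optional) elaboration. Nothing further is needed.
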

\begin{proof} This follows from
\fullref{main}
and \fullref{edgequivD}.
\end{proof}

If a 3--manifold $M$ is irreducible, then one can sharpen
\fullref{two}
by considering only $D$--reductions. Any such disc
reduction can be realized by removing an open regular neighborhood of $D$
in $M$ and getting a submanifold of $M$. So any $D$--root of $M$ is
contained in $M$.

\begin{theorem} \label{twoprime} For any irreducible 3--manifold $M$,
the $D$--root $R(M)$ exists and is unique up to isotopy and removal
of   ball connected components.
\end{theorem}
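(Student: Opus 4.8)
The plan is to promote the uniqueness statement of \fullref{two} from homeomorphism type to isotopy class by re-running the argument of \fullref{main} on a graph that lives inside the fixed manifold $M$. Fix $M$ irreducible and introduce an oriented graph $\Gamma_M$ whose vertices are the isotopy classes in $M$ of those compact submanifolds obtainable from $M$ by a sequence of non-trivial $D$--reductions, taken up to removal of ball components; an edge runs from $N_1$ to $N_2$ whenever $N_2$ results from $N_1$ by a single non-trivial $D$--reduction. The point recorded in the paragraph preceding the statement is decisive here: a disc reduction of a submanifold $N\subseteq M$ along a proper disc $D\subseteq N$ is realized by deleting an open regular neighborhood of $D$, so the result is again a submanifold of $N$ and, by transitivity, of $M$. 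Thus every edge of $\Gamma_M$ is an honest passage between isotopy classes of submanifolds of $M$. For property (CF) I would reuse $g^{(2)}(\partial N)=\sum g^2(F)$, the sum of squared genera of the boundary components $F$ of $N$; it is a homeomorphism invariant, hence descends to isotopy classes, and every non-trivial disc reduction lowers it, as was already observed in the proof of \fullref{edgequivD}. Existence of the root is then immediate from the existence argument of \fullref{main}.

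The content lies in verifying property (EE) at the level of isotopy classes: any two $D$--reductions of a submanifold $N$ have a common root as isotopy classes of submanifolds. I would copy the induction of \fullref{edgequivD} on the number $m$ of intersection curves of the two reducing discs $D_e,D_d\subseteq N$, but carry it out entirely inside $M$. When $m=0$ the discs are disjoint, so deleting regular neighborhoods of both produces one and the same submanifold $N_0\subseteq N$ regardless of the order of the cuts; hence a root of $N_0$ is literally a common root of the two one-step reductions, and the elementary equivalence holds for isotopy classes, not merely for homeomorphism types. For the inductive step I would remove an intersection curve or arc by the same innermost-circle and outermost-arc surgery, performed on discs sitting inside $M$: surgering $D_e$ along an innermost disc or outermost arc of $D_d$ yields discs meeting $D_d$ in fewer curves, together with, in the innermost-circle case, a $2$--sphere which, because $M$ is irreducible, bounds a ball and may be discarded. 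Chaining the equivalences as in \fullref{edgequivD} gives $e\sim d$, and \fullref{main} applied to $\Gamma_M$ then yields a unique root, which is exactly uniqueness up to isotopy and removal of ball components.

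The main obstacle I anticipate is ensuring that the surgeries of the inductive step genuinely stay within this isotopy-tracked framework of $D$--reductions on the irreducible manifold. One must check that irreducibility is preserved under disc reduction, so that the current submanifold $N$ is itself irreducible and the $2$--spheres created by surgery are really trivial and can be thrown away without affecting the common-root conclusion; and that the surgered curves are proper discs in $N$ itself, not just in the ambient $M$. A second delicate point is the bookkeeping of the isotopies: since a vertex of $\Gamma_M$ is an ambient isotopy class in $M$, one must confirm that the elementary equivalences along the chain $e=e_1\sim\cdots\sim e_n=d$ compose to a single ambient isotopy of $M$, so that the refined (EE) really feeds \fullref{main} and produces an isotopy, rather than merely a homeomorphism, between any two $D$--roots.
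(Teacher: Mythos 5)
Your proposal is correct and follows essentially the same route as the paper: the graph of compact submanifolds of $M$ up to isotopy and removal of ball components, the complexity function $c(Q)=g^{(2)}(\partial Q)$ for (CF), and the induction of Lemma~\ref{edgequivD} for (EE), with irreducibility used exactly as you anticipate --- cutting along a trivial disc yields the same submanifold plus a ball and hence does not change the vertex. The paper's proof is just a more compressed version of this.
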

\begin{proof} Given $M$, we begin by introducing
an oriented graph $\Gamma=\Gamma (M)$. The set of vertices of $\Gamma$ is
defined to be the set of all compact 3--submanifolds of $M$,   considered
up to isotopy and removing connected components homeomorphic to  $D^3$.
Two vertices $Q_1,Q_2$ are joined by an edge oriented from $Q_1$ to $Q_2$
if $Q_2$ can be obtained from $Q_1$ by a non-trivial $D$--reduction.
Property  (CF)  for $\Gamma$ is evident: one can take the complexity
function $c\colon V(\Gamma) \to \N$ by letting $c(Q)=g^{(2)}(\partial Q)$.
Property (EE) can be proved exactly in the same way as in the proof of
\fullref{edgequivD}.
The only difference is that cutting along a trivial
disc produces the same manifold plus a ball (because of irreducibility)
and thus preserves the corresponding vertex of $\Gamma$. Therefore, the
conclusion of the theorem follows from \fullref{main}.
\end{proof}

\begin{remark}   This
gives another proof of Bonahon's Theorem  that the
\emph{characteristic compression body} (which can be defined as
the complement to a $D$--root $R(M)\subset \Int M$) is unique up to
isotopy, see~\cite{Bo}.
\end{remark}

In the previous section we have mentioned that there is only one closed
orientable 3--manifold which is reducible and prime. On the contrary, the
number of boundary reducible 3--manifolds which are $\partial$--prime (ie,
prime with respect to boundary connected sums)  is infinite. For example,
if we take an irreducible boundary irreducible 3--manifold with $n>1$
boundary components $C_1,\ldots C_n$ and join each $C_i, \: i<n,$ with
$C_{i+1}$ by a solid tube, we obtain a boundary reducible $\partial$--prime
manifold. (In analogy to the closed case, the $\partial$--reductions take
place along non-separating discs). Nevertheless, any compact 3--manifold
has a unique decomposition into a boundary connected sum of
$\partial$--prime factors. This theorem was first proved by A~Swarup
\cite{swar}. Below we show that for irreducible 3--manifolds it can easily
be deduced from \fullref{twoprime}.
\begin{theorem} \label{twom}
Any  irreducible  3--manifold $M$ is a boundary connected sum of
$\partial$--prime factors. The  factors are determined uniquely up to
homeomorphism and -- in the non-orientable case -- replacement of the
direct product $D^2{\times}S^1 $ by the skew product $D^2 {\tilde \times
S^1} $ and vice versa.
\end{theorem}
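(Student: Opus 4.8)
The plan is to mimic the strategy of \fullref{one}: realize the boundary connected sum decomposition as a root and read off existence and uniqueness from \fullref{main}. Since being $\partial$--prime means admitting no non-trivial boundary connected sum, equivalently admitting no essential \emph{separating} proper disc (the tube examples above show that essential \emph{non}--separating discs must be ignored here), I would introduce an oriented graph $\Gamma$ whose vertices are the compact 3--manifolds, possibly disconnected, taken up to homeomorphism and removal of $D^3$--components, and whose edges record reductions along non-trivial \emph{separating} proper discs. Each such reduction splits off a boundary connected summand, and reversing the edges reassembles a boundary connected sum; hence a root of $M$ in $\Gamma$ is exactly a disjoint union of $\partial$--prime factors, and unwinding the edge path from that root back to $M$ exhibits $M$ as their boundary connected sum. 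Note that solid tori $D^2{\times}S^1$, and the tube examples, are legitimate factors, as their only essential discs are non-separating.

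Property (CF) comes essentially for free from \fullref{edgequivD}: every non-trivial disc reduction strictly lowers $g^{(2)}(\partial M)$, so $c(Q)=g^{(2)}(\partial Q)$ is a complexity function on $\Gamma$, where irreducibility of $M$ guarantees that trivial discs only spawn balls and so preserve the vertex. The existence half of the theorem is then immediate from \fullref{main}. For property (EE) I would rerun the induction on $m=\#(D_e\cap D_d)$ used in \fullref{edgequivD} and \fullref{twoprime}, resolving the intersection of two separating discs $D_e,D_d$ by the innermost--circle and outermost--arc moves. The base case $m=0$ is handled as before: two disjoint separating discs survive each other's reduction, the doubly--reduced manifold $N$ is obtained on either side, and any root of $N$ is a common root of $M_e$ and $M_d$.

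The main obstacle is precisely property (EE). Because the edges of $\Gamma$ are restricted to \emph{separating} discs, I must verify that the surgered discs $D',D''$ produced in the inductive step can again be chosen separating and essential, so that the induction never leaves the class of edges of $\Gamma$; the delicate point, which has no counterpart in the purely spherical setting of \fullref{edgequiv}, is that surgering a separating disc along an innermost subdisc or outermost sub--arc of the other can a priori destroy the separating property, and this is where the genuine case analysis lies. Once (EE) is secured, \fullref{main} delivers a root unique up to homeomorphism and ball removal, whose components are the $\partial$--prime factors, completing the uniqueness statement. Finally I would dispose of the non-orientable bookkeeping exactly as in the Kneser--Milnor case following \fullref{Milnor}: the only factors whose product structure the boundary connected sum cannot recover are $D^2{\times}S^1$ and $D^2{\tilde\times}S^1$, an orientation--reversing path in a non-orientable neighbouring factor absorbing the twist, which yields precisely the stated ambiguity of replacing one by the other.
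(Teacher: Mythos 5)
Your overall strategy---building a fresh graph whose edges are reductions along non-trivial \emph{separating} proper discs and feeding it to \fullref{main}---is not the route the paper takes, and it founders at exactly the point you flag and then leave unresolved: property (EE) for this restricted graph. The surgery step of the induction genuinely breaks down in the separating category. If $D_e$ is separating, its class in $H_2(M,\partial M;\mathbb{Z}_2)$ vanishes, so the two discs $D',D''$ obtained by surgering $D_e$ along an innermost subdisc or outermost subarc of $D_d$ satisfy $[D']=[D'']$; nothing in your argument prevents both classes from being non-zero, i.e.\ both $D'$ and $D''$ non-separating (this is already a live possibility for two separating essential discs in a genus-two handlebody). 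In that case neither surgered disc is an edge of your graph and the induction has nowhere to go. Announcing that ``this is where the genuine case analysis lies'' does not discharge the obligation: the central step of the uniqueness proof is missing, and it is not clear it can be repaired inside your framework. Your base case is also lighter than it should be, since you must still handle a separating disc that becomes \emph{trivial} after the other reduction, which is where the sphere-and-ball bookkeeping of \fullref{edgequivD} enters.

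The paper sidesteps all of this by \emph{not} restricting to separating discs. It takes the $D$--root $N=R(M)\subset M$ already furnished by \fullref{twoprime}, for which (EE) was proved using \emph{all} non-trivial discs, writes $M=N\cup H$ with $H$ a disjoint union of handlebodies meeting $N$ in discs (at most one disc per component of $N$), observes that this presentation is unique up to isotopy, and then recovers the $\partial$--prime factors by a purely combinatorial cutting argument on the bicolored graph obtained by collapsing the components of $N$ and $H$. That second, combinatorial step is precisely what substitutes for the separating-disc (EE) you would need; to complete your proposal you must either supply such a step or prove (EE) for separating discs directly, and at present you have done neither.
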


\begin{proof} Existence of a boundary prime decomposition is
evident. Let us prove the uniqueness. Recall that any disc reduction can
be realized by cutting out  a regular neighborhood of a proper disc. We
are allowed also to cast out 3--balls. It follows that we may think of the
root $N=R(M)$ of $M$ (which is well defined by \fullref{twoprime})
as sitting in $M$ such that $M$
can be obtained from $N$ by adding disjoint 3--balls and attaching handles
of index one. Therefore $M$ can be presented as $M=N \cup H$ such that $H$
is a union of disjoint handlebodies and $N\cap H $ is a collection of
discs on $\p N$.

One may easily achieve  that the intersection of any connected
component $N_i$ with $H $ consists of no more than one disc, see
\fullref{yourpict}.

\begin{figure}[h]
\labellist
\small
\pinlabel {$H$} [l] at 198 755
\pinlabel {$H$} [l] at 430 760
\pinlabel {$N_i$} at 148 673
\pinlabel {$N_i$} at 379 666
\endlabellist
\centerline{\includegraphics[height=3cm]{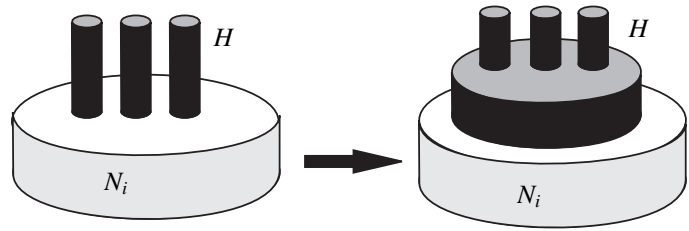}}
  \caption{The intersection of $N_i$ with $H$ consists of no more than one disc.}
  \label{yourpict}
\end{figure}

Moreover, the presentation $M=N \cup H$ as above is unique up to
isotopy.

We now wish to cut $H$ to get from the root to a prime
decomposition, see \fullref{ddeco1}: 

\begin{figure}[h]
\labellist
\small
\pinlabel {$1$} [l] at 70 710
\pinlabel {$2$} [l] at 78 755
\pinlabel {$3$} [l] at 140 755
\pinlabel {$4$} [l] at 166 710
\pinlabel {$5$} [l] at 92 686
\pinlabel {$6$} [l] at 140 686
\pinlabel {$7$} [l] at 125 657
\endlabellist
\centerline{\includegraphics[width=0.9\hsize]{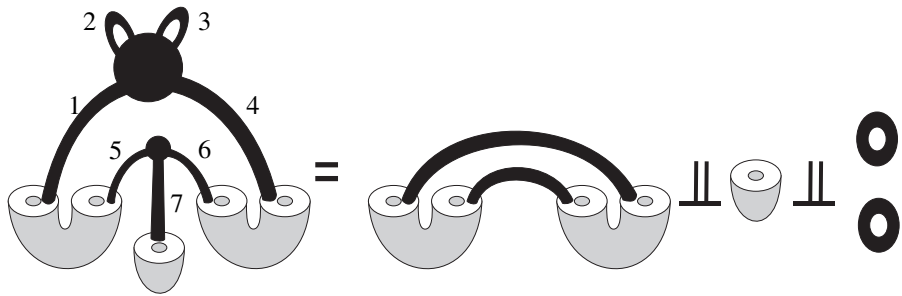}}
  \caption{$H$ consists of two balls and seven handles.}
  \label{ddeco1}
\end{figure}

Temporarily shrink each connected component of $N$ to a red point
and each component of $H$ to a core with exactly one non-red (say: green)
vertex. Call the resulting ancillary graph $G$.

$G$ \emph{admits cutting} if there is a  graph $\hat{G}$  (again
with vertices coloured green and red) such that $\hat{G}$ has one more
component than $G$ and $G=\hat{G}/x_1 \sim x_2$ for some green vertices
$x_1, x_2$ of $\hat{G}$.

Note that the cutting loci (and thus also the outcome of a maximal
cutting procedure) are uniquely determined by the equivalence relation
generated by the following rule: edges outgoing from the green vertex
$x$ are elementary equivalent if they lie in the same connected
component of $G-\{x\}$.

Let $M'$ be obtained  from $M$ by cutting along discs corresponding to
such cuts of $G$.

It follows from the construction that the connected components of the
resulting new manifold $M'$ are the $\p $--prime factors of $M$.
\end{proof}

\section{Roots of knotted graphs}
Now we will consider pairs of the type $(M,G)$, where $M$ is a compact
3--manifold and $G$ is an arbitrary graph (compact one-dimensional
polyhedron) in $M$. Recall that a 2--sphere $S\subset M$ is {\em in general
position} (with respect to $G$), if it does not pass through vertices of
$G$ and intersects edges transversely. It is {\em clean} if its
intersection with $G$ is empty.

\begin{definition} \label{df:compression} Let   $S$ be a general
position   sphere in $(M,G)$. Then the {\em reduction } (or {\em
compression}) of $(M,G)$ along $S$ consists in cutting $(M,G)$
along
 $S$ and taking disjoint cones over $(S_{\pm}, S_{\pm}\cap G)$, where
 $S_{\pm}$ are the two copies of $S$  appearing under the cut.
\end{definition}

Equivalently, the reduction along $S$ can be described as
compressing $S$ to a point and cutting the resulting singular
manifold along that point.

If   $(M',G')$ is obtained from $(M,G)$ by reduction along
 $S$, we write $(M',G')=(M_S,G_S)$.
 The two cone points in $M_S$ are called {\em stars}.
 They lie in $G_S$ if and only if
 $S\cap G\neq \emptyset$.

It makes little sense to consider all possible spherical
reductions, since then there would be no chance to get existence
and uniqueness of a root, see below. In order to describe
allowable reductions of the pair $(M,G)$, we introduce two
properties of spheres in $(M,G)$.

\begin{definition} \label{df:admis} Let $S$ be a 2--sphere in
$(M,G)$. Then $S$ is called
\begin{enumerate}
\item[(1)] {\em compressible} if there is a disc $D\subset M$ such
that $D\cap S=\p D$,
 $D\cap G=\emptyset$, and each of the two discs bounded by $\p D$ on $S$
 intersects $G$; otherwise $S$ is {\em incompressible};

 \item[(2)] {\em admissible}  if $S\cap G$ consists of no more than three
transverse crossing points.

  \end{enumerate}
\end{definition}

The following examples show that compressions along compressible and
inadmissible spheres may produce different roots.

\begin{Example}  Take the   knot $k$ in $M=S^2{\times}S^1$
shown in \fullref{counter}. 

\begin{figure}[h]
\labellist
\large
\pinlabel {$ \bigcup $} [l] at 424 686
\endlabellist
\centerline{\includegraphics[height=2.5cm]{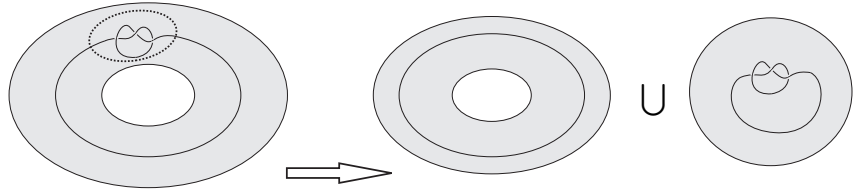}}
  \caption{Example of a spherical reduction along a compressible sphere}
  \label{counter}
\end{figure}

$k$ mainly follows $\{ \ast \}{\times}S^1$, but has a trefoil in it. If we
allowed to perform reduction along the dotted sphere $S$ (which is
compressible!), then $(M,k)$ would split off a $3$--sphere containing the
trefoil knot. But note that $k$ is in fact equivalent to $\{ \ast \}
{\times}S^1$. Indeed, by deforming some little arc of the trefoil all the
way across $ S^2   {\times} \{ \ast \}$, we can change an overcrossing to an
undercrossing so that the knot $k$ comes undone. Thus, $(M,k)$ would equal
$(M,k)$ plus a non-trivial summand and there would be no hope for
uniqueness of the root.\end{Example}

\begin{Example}  Let $(M,G)$ be the standard circle with two parallel
chords in $S^3$. As we see from \fullref{five}, compressions of $(M,G)$
along two different spheres (admissible and non-admissible) produce two
different roots.\end{Example}

Note that the existence of an admissible compressible sphere implies the
existence of either a separating point of $G$ or an $(S^2{\times}
S^1)$--summand of $M$.

\begin{figure}[ht]
\centerline{\includegraphics[height=5cm]{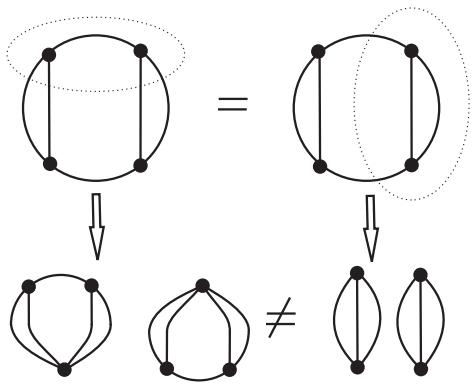}}
  \caption{Example of a spherical reduction along an inadmissible sphere}
  \label{five}
\end{figure}

\begin{definition} \label{df:trivial} A sphere $S$ in $(M,G)$ is called {\em
trivial} if it bounds a ball $V\subset M$ such that the pair $(V, V\cap
G)$ is homeomorphic to the pair $(\Con (S^2), \Con (X))$, where $X\subset
S^2$ consists of $\leq 3$ points and $\Con$ is the cone.
An incompressible admissible non-trivial sphere  is called {\em essential}.
\end{definition}

Note that reduction of $(M,G)$ along   a trivial sphere produces a
homeomorphic copy of $(M,G)$ and a {\em trivial} pair $(S^3,G)$, where $G$
is either empty, or a simple arc, or an unknotted circle, or an unknotted
(ie contained in a disc) theta--curve.

Let $G$ be a knotted graph in a 3--manifold $M$. Following the same lines
as in the previous sections, we begin by introducing an oriented graph
$\Gamma $. The set $V(\Gamma)$ of vertices   is defined to be the set of
all pairs $(M,G)$, considered up to homeomorphism of pairs and removing
connected components homeomorphic to trivial pairs. Two vertices
$(M_1,G_1), (M_2,G_2)$ are joined by an edge oriented from $(M_1,G_1)$ to
$(M_2,G_2)$ if $(M_2,G_2)$ can be obtained from $(M_1,G_1)$ by reduction
along some  essential  sphere.

We need an analogue of \fullref{boundg}
(Kneser's Lemma) for manifolds with knotted graphs.

\begin{lemma} \label{boundgraph} Suppose that $(M,G)$ contains no clean essential
 spheres, ie, that the manifold $M\setminus G$ is irreducible.
   Then there is a constant $C_1$ depending only on
$(M,G)$ such that any sequence of   reductions along essential
spheres consists of no more than $C_1$ moves.
\end{lemma}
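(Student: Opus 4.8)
The plan is to bound the number of essential spherical reductions by relating them to ordinary spherical reductions, for which we already have Kneser's Lemma (\fullref{boundg}). The key idea is that an essential sphere in $(M,G)$ is in particular a sphere in $M$, and I would like to say that a long sequence of reductions in the pair $(M,G)$ forces a long sequence of non-trivial spherical reductions in $M$ alone, whose length is bounded by the constant $C_0$ of \fullref{boundg}. The subtlety is that an essential sphere $S$ need not be non-trivial as a sphere in $M$: it may bound a $3$--ball in $M$, meeting $G$ in up to three points, so that the reduction is non-trivial in the pair but trivial in the manifold. Thus the count cannot be controlled by $C_0$ alone.

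First I would separate the essential spheres into two types along any given reduction sequence: those that are non-trivial as spheres in the ambient manifold, and those that bound a ball in $M$ but meet $G$ in one, two, or three points. For the first type, each such reduction is a non-trivial spherical reduction of the underlying manifold (after discarding the knotted graph), so by \fullref{boundg} the number of reductions of this type along any sequence is at most $C_0$. The second type consists of reductions that simplify only the graph without changing the manifold up to connected sum with $S^3$; for these I would introduce a complexity of the pair measured by data of $G$ relative to the spheres already used — for instance the number of edges, vertices, or the first Betti number of $G$ together with the combinatorics of how $G$ sits in the manifold. The goal is to exhibit a well-ordered complexity on the pair $(M,G)$ that strictly decreases under every essential reduction and whose initial value is finite, so that any monotone sequence terminates in a bounded number of steps.

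The main obstacle will be controlling the second type of move, where $S$ bounds a ball meeting $G$ in at most three points. Here I would use the irreducibility hypothesis $M\setminus G$ is irreducible crucially: since there are no clean essential spheres, every essential sphere genuinely interacts with $G$, so a reduction along such a sphere must strictly reduce some combinatorial invariant of the embedded graph, such as the complexity of $G$ as a graph in the ball it bounds or the number of intersection points summed over a suitable finite set of testing surfaces. The incompressibility and admissibility conditions in \fullref{df:admis} are designed to rule out the degenerate situations (like the trefoil example) in which such a reduction could be repeated indefinitely, so I expect these hypotheses to be exactly what makes the invariant well defined and strictly decreasing.

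Combining the two bounds, the total number of essential reductions in any sequence is at most $C_0$ plus the bound on the second type, yielding a constant $C_1$ depending only on $(M,G)$. In practice I would phrase the whole argument by constructing a single complexity function $c(M,G)$ taking values in a well-ordered set such as $\N_0^2$, with first coordinate the maximal number of non-trivial spherical reductions of $M$ (the $C_0$ quantity) and second coordinate a combinatorial invariant of the pair; each essential reduction strictly decreases $c$ in the lexicographic order, and the finiteness of $c(M,G)$ then gives the desired uniform bound $C_1$.
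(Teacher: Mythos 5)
There is a genuine gap, and it sits exactly where the real difficulty of the lemma lies. Your reduction to two types of spheres disposes of the easy type (spheres that are non-trivial in the ambient manifold $M$, controlled by the constant $C_0$ of \fullref{boundg}), but for the second type --- essential spheres of $(M,G)$ that bound a ball in $M$ meeting $G$ in one, two or three points --- you only assert that ``some combinatorial invariant of the embedded graph'' must strictly decrease, without producing one. No naive invariant of the kind you name works: if $K$ is a connected sum of $N$ prime knots in $S^3$, each of the $N-1$ swallow--follow decomposing spheres meets $K$ in two points and bounds a ball in $S^3$, yet reductions along them change neither the number of edges and vertices of the graph, nor its first Betti number, nor the homeomorphism type of the ambient manifold. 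Bounding the length of such sequences is essentially the finiteness of prime decomposition for knots and knotted graphs, which is the actual content of the lemma; it cannot be extracted from invariants of $G$ as an abstract graph, and your proposed lexicographic complexity is therefore not shown (and for the examples above is in fact not able) to decrease at the second coordinate.

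The paper closes this gap with a Kneser--Haken finiteness argument rather than a decreasing complexity. One fixes a triangulation $T$ of $(M,G)$ with $G$ in the $1$--skeleton and sets $C_1=10t$, where $t$ is the number of tetrahedra. A sequence of $n$ essential reductions yields $n$ disjoint, pairwise non-parallel essential spheres in $(M,G)$; using incompressibility of the spheres and the hypothesis that $M\setminus G$ is irreducible (this is precisely where that hypothesis enters), their union is isotoped into normal position. Each tetrahedron carries at most $10$ ``black'' normal patches not sandwiched between parallel patches of the same type, so if $n>10t$ some sphere consists entirely of patches parallel to neighbours, forcing a product chamber $S^2\times I$ between two of the spheres and contradicting non-parallelism. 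Your splitting into two types of spheres is, incidentally, much closer in spirit to the proof of the subsequent \fullref{bound2}, which treats the general (non-irreducible) case by combining \fullref{boundg} with the present lemma; but for \fullref{boundgraph} itself the normal surface count, or some equally substantive substitute, is indispensable.
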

\begin{proof} We choose a triangulation $T$ of  $(M,G)$ such
that $G$ is the union of (some of the) edges and vertices of $T$. Let
$C_1=10t$, where $t$ is the number of tetrahedra in $T$. Consider a
sequence $S_1,\ldots, S_n\subset (M,G)$ of $n>C_1$ disjoint spheres such
that each sphere $S_k$ is essential in the pair $(M_k,G_k)$ obtained by
reducing $(M_0,G_0)=(M,G)$ along $S_1,\ldots, S_{k-1}$.
 It is easy to see that
the spheres   are essential in $(M,G)$ and not parallel one to
another.

We claim that the union $F=S_1\cup \cdots \cup S_n$  can be shifted into
normal position by an isotopy of the pair $(M,G)$. To prove that, we
adjust to our situation two types of moves for the standard normalization
procedure of arbitrary incompressible surface in an irreducible
3--manifold.

{\bf Tube compression}\qua Let $D$ be a disc in $(M,G)$ such that
  $D\cap F=\p D$ and $D$ does not intersect the edges of $T$
  (in practice $D$ lies either in a face or in the
  interior of a tetrahedron). Since $F$
  is a collection of incompressible spheres, compression along
  $D$ produces a copy $F'$ of $F$ and a clean sphere $S'$. By
  irreducibility of $M\setminus G$, $S'$   bounds a clean ball,
    which helps us to construct an isotopy  of $(M,G)$ taking
  $F$ to $F'$.

{\bf Isotopy through an edge}\qua Let $D$ be a disc in a tetrahedron $\Delta$
such that $D\cap F$ is an arc in $\p D$ and $\p D$ intersects an edge $e$
of $\Delta$ along the  complementary arc of $\p D$.  If   $e$ were
in $G$, by incompressibility there could be no further intersection of $G$
and the component of $F$ containing the arc of $\p D$. As each clean
sphere bounds a ball this component would be a trivial sphere in $(M,G)$.
Thus, $e$ is not in $G$, so we can use $D$ to construct an isotopy of
$(M,G)$ removing two points in $F \cap e$.

Performing those moves as long as possible, we transform $F$ into
normal position.

 Now we notice that the normal surface $F$
decomposes $M$ into pieces (called {\em chambers}) and  crosses each
tetrahedron of $T$ along triangle and quadrilateral pieces (called
{\em patches}). Let us call a patch
  {\em black}, if it   does not lie between two parallel patches
  of the same type. Each tetrahedron contains at most  $10$ black
  patches: at most 8 triangle patches and at most 2 quadrilateral ones.
   Since $n>10t$, at least one of the spheres  is
  {\em white}, ie contains no black patches. Let $C$ be a
  chamber such that $\partial C$ contains a white sphere and a
  non-white sphere.
 Then $C$ crosses each tetrahedron along some number of
  prisms of the type $P{\times} I$, where $P$ is a triangle or a
  quadrilateral.

Since the patches $P\times \{ 0,1\} $ belong to different spheres, $C$
is $S^2\times I$.

  This contradicts our assumption that the spheres
  be not parallel.
\end{proof}

\begin{lemma} \label{new} Let a pair $(M_S,G_S)$ be obtained from
a pair   $(M,G)$ by reduction along an incompressible sphere $S$
such that $S\cap G\neq \emptyset$. Then $M\setminus G$ is
reducible if and only if so is $M_S\setminus G_S$.
\end{lemma}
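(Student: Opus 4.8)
The plan is to reformulate reducibility concretely and then run a standard innermost-disc argument in both directions. Recall that for a \emph{clean} sphere (one disjoint from $G$) admissibility and incompressibility are automatic, so ``$M\setminus G$ is reducible'' means exactly that there is a clean sphere in $(M,G)$ bounding no clean ball, i.e.\ an essential sphere of the open $3$--manifold $M\setminus G$. The two open manifolds are closely related: writing $N(S)$ for a tubular neighbourhood of $S$, the manifold $M_S\setminus G_S$ is obtained from $M\setminus G$ by cutting along the properly embedded punctured sphere $S\setminus G$ and capping the two resulting ends with cones over punctured spheres, whose cone points are the two stars; these lie in $G_S$ precisely because $S\cap G\neq\emptyset$. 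In particular every clean sphere of $M_S\setminus G_S$ avoids the stars and can be pushed off small neighbourhoods of them.

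For the implication $M\setminus G$ reducible $\Rightarrow$ $M_S\setminus G_S$ reducible, I would take a clean essential sphere $\Sigma\subset M\setminus G$, put it in general position with $S$, and minimise the number of circles of $\Sigma\cap S$ by a clean isotopy. If this number is zero then $\Sigma$ survives the reduction as a clean sphere of $M_S\setminus G_S$; it stays essential, since any clean ball it bounded in $M_S$ must avoid the capping balls (a clean ball cannot contain a star) and would therefore already be a clean ball in $M\setminus G$. If the number is positive, choose a circle $c$ innermost on $\Sigma$, bounding a clean disc $D_\Sigma\subset\Sigma$. Incompressibility of $S$ forbids both discs cut off by $c$ on $S$ from meeting $G$, so one of them, $D_S$, is clean, and $D_\Sigma\cup D_S$ is a clean sphere. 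If it bounds a clean ball I isotope $\Sigma$ across $D_S$ to reduce $\#(\Sigma\cap S)$, contradicting minimality; otherwise $D_\Sigma\cup D_S$, pushed off $S$, is already a clean essential sphere disjoint from $S$, reducing to the previous case. The reverse implication is symmetric: given a clean essential sphere $S'\subset M_S\setminus G_S$, which avoids the stars, I would remove its intersections with the capping balls $B_{\pm}$ by innermost-disc surgeries on $\partial B_{\pm}=S_{\pm}$, again using incompressibility of $S$ to choose the surgering discs clean; once $S'$ lies in $M\setminus N(S)\subset M\setminus G$, the same clean-ball pull-back shows one resulting piece is essential in $M\setminus G$.

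I expect the main obstacle to be exactly the preservation of essentiality under these surgeries: one must rule out the degenerate possibility that every piece produced bounds a clean ball. The resolution is the standard reassembly argument---if all surgered pieces bounded clean balls, one could glue them back across the surgering discs to exhibit a clean ball bounded by the original essential sphere, a contradiction. Making this reassembly go through on both sides is where incompressibility of $S$ (which prevents a compressing disc forcing intersection circles to be essential on both copies) and the placement of the stars inside $G_S$ (which keeps every clean ball away from the cone points) are used decisively.
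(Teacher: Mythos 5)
Your proposal is correct and follows essentially the same route as the paper: the forward direction is exactly the paper's innermost-circle argument, using incompressibility of $S$ to find a clean disc $D\subset S$ and then either pushing the clean sphere $D\cup D_1$ off $S$ (if essential) or isotoping across the clean ball it bounds to reduce the intersection count, until a clean essential sphere disjoint from $S$ survives into $M_S$. The paper dismisses the reverse direction as evident, whereas you sketch a surgery/reassembly argument; this is more than is needed (a clean sphere in $M_S\setminus G_S$ avoids the stars and can simply be isotoped off the cone balls using their product structure away from $G_S$), but it is not a gap.
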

\begin{proof} Let $S_1$ be a clean essential sphere in $M$. Suppose
that $S\cap S_1\ne \emptyset$. Using an innermost circle argument,
we find a disc $D_1\subset S_1$ such that $D_1\cap S=\partial
D_1$. Since $S$ is incompressible, $\partial D_1$ bounds a clean
disc $D\subset S$. Now we may construct a new clean sphere
$S_1'\subset M$ such that $S'_1\cap S$ consists of a smaller
number of circles than $S_1\cap S$. Indeed, if the sphere $D\cup
D_1$ is essential, we take $S_1'$ to be a copy of $D\cup D_1$
shifted away from $S$. If $D\cup D_1$ bounds a clean ball, we use
this ball for constructing an isotopy of $D_1$ to the other side
of $S$. This isotopy takes $S_1$ to $S_1'$.

Doing so for as long as possible, we get a clean essential sphere
$S_1\subset M$ such that $S\cap S_1=\emptyset$. It follows that
then $S_1$, considered as a sphere in $M_S$, is essential.

The proof of the lemma in the reverse direction is evident.
\end{proof}

\begin{lemma} \label{bound2} For any pair  $(M,G)$ there exists a
constant $C $ such that any sequence of reductions along essential spheres
consists of no more than $C$ moves.
\end{lemma}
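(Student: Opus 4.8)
The plan is to bound the length of a reduction sequence by separating the reductions into those along \emph{clean} essential spheres (disjoint from $G$) and those along essential spheres that actually meet $G$, bounding each type and reducing the second to the already settled irreducible case of \fullref{boundgraph}. As in the previous lemma, I would first set up the sequence as a disjoint system $F=S_1\cup\cdots\cup S_n$ of $2$--spheres in $(M,G)$: after each reduction the next sphere can be pushed off the stars, so the whole sequence is recorded by pairwise disjoint spheres in the original pair, each essential in the pair reached at its own stage.

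Writing $W=M\setminus\Int N(G)$ for the complement of a regular neighborhood of $G$, note that a clean sphere is nontrivial in $(M,G)$ exactly when it bounds no clean ball on either side, i.e.\ exactly when it is essential in $W$. Hence the clean members of $F$ form a disjoint, pairwise non-parallel system of essential spheres in the compact manifold $W$, and \fullref{boundg} applied to $W$ bounds their number by the constant $C_0=c(W)$ depending only on $(M,G)$. For the reductions along spheres meeting $G$ I would invoke \fullref{new}: a reduction along an incompressible sphere meeting $G$ neither creates nor destroys reducibility of the complement. The plan is to use this to perform all clean reductions first, arriving at a pair each component of which has irreducible complement, and then to apply \fullref{boundgraph} componentwise. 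Fixing a triangulation $T$ of $(M,G)$ with $t$ tetrahedra, the number of reductions occurring after the complement has been made irreducible is at most $10t$, so that $C=c(W)+10t$ gives the asserted bound.

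The hard part will be making the separation of clean and non-clean reductions rigorous in the presence of interleaving. \fullref{new} is only qualitative (reducible versus irreducible), whereas the argument above really wants \emph{monotone} control: that reductions meeting $G$ do not increase the number of independent essential spheres of the complement, and, conversely, that a clean sphere which is essential at its own stage is still essential and non-parallel to the others in the \emph{fixed} complement $W$, so that \fullref{boundg} genuinely applies to the system of clean spheres. I expect the cleanest route is an induction on $c(W)$, with base case $c(W)=0$ supplied verbatim by \fullref{boundgraph}: in the inductive step one peels off a single clean essential sphere, uses \fullref{new} to see that the remaining non-clean reductions take place in pieces of strictly smaller complement complexity, and thereby reduces to the inductive hypothesis together with \fullref{boundgraph}.
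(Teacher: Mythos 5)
Your outline follows the same strategy as the paper's proof: split the sequence into clean and dirty reductions, bound the clean ones by Kneser's Lemma (\fullref{boundg}) applied to the graph complement, use \fullref{new} to push all clean reductions to the front, and then invoke \fullref{boundgraph} for what remains. But there is a genuine gap exactly at the point you flag as ``the hard part,'' and your proposed fix (induction on $c(W)$) does not close it. When you reorder the sequence so that the clean reductions come first, a dirty sphere $S_k$ that was essential in the pair $(M_{k-1},G_{k-1})$ reached at its own stage need \emph{not} remain essential in the pair $(M'_{k-1},G'_{k-1})$ obtained after performing the extra clean reductions: it can become a \emph{trivial} dirty sphere, bounding a cone pair that it did not bound before because that ball now contains cone points created by the clean reductions. \fullref{boundgraph} only counts reductions along spheres that are essential in the irreducible-complement pair, so these trivialized spheres escape both of your counts, and the claimed bound $C=c(W)+10t$ undercounts. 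Your inductive scheme has the same problem one level down: to ``peel off a single clean essential sphere'' occurring at stage $k$ you must commute it past $S_1,\dots,S_{k-1}$, and it is precisely under such commutation that essentiality can be lost.

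The paper handles this with a third class of spheres and an explicit counting device. It marks the \emph{clean stars}, i.e.\ the images of the cone points produced by the clean reductions; there are at most $2C_0$ of them. A dirty sphere that is essential at its own stage but trivial after the reordering splits off a trivial pair, and that trivial pair must contain at least one clean star (otherwise the sphere would already have been trivial before the clean reductions). Since no star can land in two different split-off trivial pairs, there are at most $2C_0$ such spheres, giving the total bound $n\le 3C_0+C_1$. You need some version of this argument (or another mechanism that prevents essential-at-their-stage dirty spheres from being discarded as trivial after reordering) before your proof is complete. The rest of your proposal --- the identification of clean nontrivial spheres with essential spheres of $M\setminus\Int N(G)$ and the application of \fullref{boundgraph} once the complement is irreducible --- is sound and matches the paper.
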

\begin{proof} Let $S_1,\ldots S_n\subset (M,G)$ be
such a sequence of   essential spheres. For any $k, 1\leq k\leq n
,$ we denote by $(M_k,G_k)$ the pair obtained from
$(M_0,G_0)=(M,G)$ by reductions along the spheres $S_1, \ldots,
S_{k}$. We may assume that the last pair $(M_n,G_n)$  admits no
further reductions along clean essential spheres. Otherwise we
extend the sequence of reductions by new reductions along clean
essential spheres until we get a pair with irreducible graph
complement.
 Denote also by
$(M'_k,G'_k)$ the pair obtained from $(M_k,G_k)$ by additional reductions
along all remaining clean spheres from the sequence $S_{1},\ldots S_n$.
Note that $(M'_n,G'_n)$ is obtained from $(M'_k,G'_k)$ by reductions along
dirty spheres and that $M'_n\setminus G'_n$ is irreducible. By
\fullref{new}
$ M'_k\setminus G'_k$ also is irreducible and hence
contains no clean essential spheres.

It is convenient to locate the set $X$ of {\em clean stars} (the images of
the cone points under reductions of all  clean essential spheres from
$S_1,\ldots S_n$). Then $X$ consists of no more than $2C_0$ points, where
$C_0=C_0(M,G)$ is the constant from \fullref{boundg}
for a compact
3--manifold whose interior is $M\setminus G$. We may think of $X$ as being
contained in all $(M'_k,G'_k)$.

Let us decompose the set $S_1,\ldots S_n$ into three subsets
$U,V,W$ as follows:
\begin{enumerate}
\item $S_k\in U$ if  $S_k$ is clean.

\item $S_k\in V$ if  $S_k$, considered as a sphere in
$(M'_{k-1},G'_{k-1})$, is an essential sphere (necessarily dirty).

\item $S_k\in W$ if $S_k$ is a trivial dirty sphere in
$(M'_{k-1},G'_{k-1})$.
\end{enumerate}

Now we estimate the numbers $\# U,\# V,\# W$ of spheres in $U,V,W$.  Of
course, $\# U\leq  C_0$ and $\# V\leq  C_1$, where $C_0$ is as above and
$C_1=C_1(M'_0,G'_0)$ is the constant from \fullref{boundgraph}.
Let us
prove that $\# W\leq 2C_0$. Indeed, the reduction along each sphere
$S_k\in W$ transforms $(M'_{k-1},G'_{k-1})$ into a copy of
$(M'_{k-1},G'_{k-1})$ and a trivial pair $(S^3_{k-1},\Gamma_{k-1})$
containing at least one
  clean star.   Since no star can appear in two different trivial
  pairs and since the total number of clean
stars does not exceed $2C_0$, we get $\# W\leq 2C_0$. Combining
these estimates, we get $n\leq C=3C_0+C_1$.
\end{proof}

\begin{lemma} \label{edgequivKG} The graph $\Gamma=\Gamma (M,G)$
possesses properties (CF) and
 (EE).
\end{lemma}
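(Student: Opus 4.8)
The plan is to establish (CF) and (EE) for the graph $\Gamma=\Gamma(M,G)$ of knotted-graph reductions, following the same two-pronged strategy used in Lemmas~\ref{edgequiv} and~\ref{edgequivD}, but now drawing on the finiteness result of \fullref{bound2} in place of Kneser's Lemma. Property (CF) should be immediate: by \fullref{bound2} there is a constant $C=C(M,G)$ bounding the length of any sequence of reductions along essential spheres, so I would define the complexity function $c\colon V(\Gamma)\to\N_0$ by letting $c(M,G)$ be the maximal number of essential spherical reductions applicable to $(M,G)$. This is well defined precisely because of \fullref{bound2}, and it is compatible with the orientation of $\Gamma$ since every edge corresponds to a reduction that strictly shortens the maximal remaining sequence, exactly as in \fullref{comfun}.

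The substance is property (EE), and here I would imitate the proof of \fullref{edgequiv}: take two essential spheres $S_e,S_d$ in $(M,G)$ with corresponding edges $e,d$ sharing the initial vertex $(M,G)$, isotope them (as a pair, rel $G$) to minimize the number $m=\#(S_e\cap S_d)$ of intersection circles, and induct on $m$. For the base case $m=0$ the spheres are disjoint, so $S_d$ survives reduction along $S_e$ and vice versa; reducing $(M,G)$ along both in either order yields the same pair $N$, and I would argue that any root of $N$ is a common root of $(M_e,G_e)$ and $(M_d,G_d)$, handling the sub-case where one sphere becomes trivial after reducing along the other by invoking that trivial pairs are neglected at the level of vertices of $\Gamma$. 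The inductive step uses an innermost-circle argument on $S_e\cap S_d$ to find a disc $D\subset S_d$ with $D\cap S_e=\partial D$, compresses $S_e$ along $D$, and produces spheres meeting $S_d$ in fewer circles, whence $e\sim d$ by transitivity of $\sim$.

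The main obstacle, and the real difference from the clean-manifold setting of \fullref{edgequiv}, is that the compressing disc $D$ and the resulting spheres must respect the graph $G$: the spheres involved must remain \emph{admissible} (meeting $G$ in at most three transverse points) and \emph{incompressible} in the sense of \fullref{df:admis}, so that the edges they define actually live in $\Gamma$. I expect the delicate point to be verifying that the innermost-circle compression can be arranged so that the two pieces $S',S''$ obtained from $S_e$ are again essential spheres rather than trivial or inadmissible ones, and that one of them is genuinely usable as an intermediate edge. Here I would exploit the incompressibility hypothesis built into essentiality together with \fullref{new}, which controls how reducibility of the graph complement behaves under reduction along an incompressible dirty sphere, to rule out the degenerate outcomes. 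The bookkeeping of the at-most-three intersection points of each sphere with $G$ under compression is the part requiring genuine care, but once it is checked the equivalence $e\sim d$ follows by the same transitive chaining as before, and (EE) is established.
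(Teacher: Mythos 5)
Your treatment of (CF), of the base case $m=0$, and of the inductive step in the case where the innermost disc $D\subset S_d$ is clean all match the paper's argument. But the proposal has a genuine gap at exactly the point you flag and then defer: the case where \emph{no} innermost disc of $S_d$ (nor, by symmetry, of $S_e$) is disjoint from $G$. There you cannot ``compress $S_e$ along $D$'' in the usual sense, and the tools you name --- incompressibility of essential spheres together with \fullref{new} --- do not produce the required intermediate edge: \fullref{new} concerns persistence of reducibility of $M\setminus G$ under reduction along dirty incompressible spheres and is used to prove the finiteness statement \fullref{bound2}, not to manufacture new admissible spheres in an innermost-disc argument.

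The missing idea is a pigeonhole argument, and it is precisely where the bound of three points in the definition of admissibility earns its keep. Since $S_d$ has at least two innermost discs, none of them clean, and $\#(S_d\cap G)\leq 3$, some innermost disc $D\subset S_d$ meets $G$ in exactly one point. Its boundary splits $S_e$ into two discs $D'$ and $D''$; each of these contains an innermost disc of $S_e$, hence (as $S_e$ also has no clean innermost discs) each meets $G$, and since $\#(S_e\cap G)\leq 3$ one of them, say $D'$, meets $G$ in exactly one point. The sphere $S_e'=D\cup D'$ then meets $G$ in exactly two points, so it is admissible; one checks it is also incompressible and non-trivial, it is disjoint from $S_e$ after a small push-off, and it crosses $S_d$ in fewer than $m$ circles, so the induction closes via $e\sim e'\sim d$. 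Without this construction the induction need not terminate, since an essential sphere can meet $S_d$ so that every innermost disc is dirty; asserting that the bookkeeping ``once checked'' yields the result is exactly the step that requires this argument.
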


\begin{proof}
 We define the  complexity function $c\colon V(\Gamma) \to
\N_0$ just as in   the proof of \fullref{comfun}:
$c(M,G)$ is
the maximal number of   reductions in any sequence of essential
spherical reductions of $(M,G)$. By \fullref{bound2},
this
is well defined. Evidently, $c$ is compatible with the
orientation.

 The proof of property (EE) is similar to the proof of the same
 property for the case $G=\emptyset$, see \fullref{edgequiv}.
Let  $S_e ,S_d$ be two essential spheres in $(M,G)$ corresponding
to edges   $e, d$  of $\Gamma$. We prove
 the equivalence $e\sim d$ by induction on the number
 $m=\# (S_e\cap S_d)$ of curves
 in the intersection assuming that the  spheres have been
   shifted by isotopy of $(M,G)$ so that $m$ is minimal.
   The base of the
 induction, when $ m=0$, is evident, since
 reductions along disjoint spheres commute
 and thus, just as in the proof of \fullref{edgequiv}.
  produce knotted graphs having a common root.

Let us prove the inductive step.  Suppose that   $S_d$  contains a
disc $D$ such   $D\cap S_e=\p D$. Assume that $D$ is clean, ie
$D\cap G=\emptyset$.  Then we compress $S_e$ along $D$. We get two
spheres $S' ,S'' $, each disjoint with $S_e$ and intersecting
$S_d$ in a smaller number of circles. Since $S_e$ is
incompressible and $m$ is minimal, at least one of them (say $S'$)
must be clean and
  essential. As $S_e\sim S'$ and
$S'\sim S_d$ by the inductive assumption, we get $S_e\sim S_d$.

Now we may assume that $S_d$ (and, by symmetry, $S_e$) contain  no
innermost clean discs. Since $S_d$ contains at least two innermost
discs and $\# (S_d\cap G)\leq 3$, there is an innermost disc
$D\subset S_d$ crossing $G$ at exactly one point. Its boundary
decomposes $S_e$ into two discs $D',D''$, both crossing $G$. Since
$\# (S_e\cap G)\leq 3$, at least one of them (let $D'$) crosses
$G$ at exactly one point.  Then the sphere $S_e'=D\cup D'$ is
admissible, incompressible, and non-trivial. Moreover, it is
actually disjoint with $S_e$ and crosses $S_d$ in less than $m$
circles. Using the inductive assumption, we get   $S_e\sim S_d$
again.\end{proof}

\begin{theorem} \label{three} For any knotted graph $(M,G)$
the  root $R(M,G)$ exists and is unique up to homeomorphism and removal of
trivial components.
\end{theorem}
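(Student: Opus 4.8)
The plan is to deduce the statement as an immediate application of \fullref{main}, exactly as was done for \fullref{one} and \fullref{two}. All of the structural work has already been carried out: in the paragraph preceding \fullref{boundgraph} we introduced the oriented graph $\Gamma=\Gamma(M,G)$ whose vertices are the pairs $(M,G)$ taken up to homeomorphism of pairs and removal of trivial components, with an oriented edge running from $(M_1,G_1)$ to $(M_2,G_2)$ precisely when the second pair arises from the first by reduction along an essential sphere. In this graph a root, in the sense of the first definition of Section~2, is exactly a pair that admits no further essential reduction and is reachable from $(M,G)$ by a sequence of such reductions; this is precisely the topological object $R(M,G)$ we wish to produce.

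First I would recall that \fullref{edgequivKG} already asserts that this very graph $\Gamma$ possesses \emph{both} property (CF) and property (EE). Property (CF) is witnessed by the complexity function $c(M,G)$ counting the maximal length of a sequence of essential reductions, which is well defined thanks to the Kneser-type bound of \fullref{bound2}; property (EE) is the elementary-equivalence statement established there by induction on the number $m$ of intersection circles of two essential spheres, using the innermost-disc surgery together with the admissibility constraint $\#(S\cap G)\leq 3$.

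With both hypotheses of \fullref{main} thus verified, I would simply invoke that theorem to conclude that every vertex of $\Gamma$ has a unique root. Unwinding the identifications built into the vertex set $V(\Gamma)$, this is precisely the assertion that $R(M,G)$ exists and is unique up to homeomorphism and removal of trivial components, which is the claim.

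I anticipate no genuine obstacle in this final step: the entire mathematical content of the theorem resides in verifying (CF) and (EE), and that verification is already packaged in \fullref{edgequivKG} (resting in turn on \fullref{bound2} and \fullref{new}). The only point demanding a moment's care is the bookkeeping of the equivalence relation on vertices, namely checking that ``root of the vertex represented by $(M,G)$'' and ``topological root up to trivial components'' genuinely coincide; but this identification is forced by the way $V(\Gamma)$ was defined and introduces no real difficulty.
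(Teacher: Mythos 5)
Your proposal is correct and matches the paper's proof exactly: the paper also derives \fullref{three} as an immediate consequence of \fullref{main} together with \fullref{edgequivKG}, which supplies properties (CF) and (EE) for the graph $\Gamma(M,G)$. Your additional remarks on unwinding the definition of the vertex set are accurate bookkeeping and introduce nothing beyond what the paper intends.
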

\begin{proof} This follows from
\fullref{main}
and \fullref{edgequivKG}.
\end{proof}

 According to our definition of  the graph $\Gamma$ for the case of
 knotted graphs,
 its vertices and hence roots of knotted graphs  are defined only
 modulo removing trivial pairs.  One of the advantages of
   roots introduced in that manner is the  flexibility
 of their construction:   each next reduction    can be performed
  along {\em any} essential sphere.
  We pay for that by the  non-uniqueness:
     roots of   $(M,G)$ can   differ by their trivial
  connected components.  This is natural, but might seem to
 be inconvenient. We improve this by introducing {\em efficient roots},
 which are free from that shortcoming
 (the idea is borrowed from Petronio~\cite{petr}).

\begin{definition} A system ${\cal S}=S_1\cup  \cdots \cup S_n$
 of disjoint incompressible spheres
  in $(M,G)$ is called {\em efficient} if the following holds:
  \begin{enumerate}
  \item[(1)] reductions along all the spheres give a root of $(M,G)$;
  \item[(2)] any sphere $S_k, 1\leq k\leq n,$ is essential in the
  pair $(M_{{\cal S}\setminus S_k}, G_{{\cal S}\setminus S_k})$ obtained
   from $(M,G)$ by reductions along all
  spheres $S_i,1\leq i\leq n,$ except $S_k$.
\end{enumerate}
  \end{definition}

  Efficient systems certainly exist; to get one, one may
  construct a system
  satisfying (1) and merely  throw
   away one after another  all spheres not satisfying (2).
Having an efficient system, one can get another one by the
following
 moves.
\begin{enumerate}
\item  Let $a\subset (M,G)$ be a clean simple
 arc which joins a sphere $S_i$ with a clean
 sphere $S_j,  i\neq j,$ and has no  common points  with  $\cal S$ except
 its ends. Then the  boundary $\p N$ of a regular neighborhood
 $N(S_i\cup a\cup S_j)$ consists of a copy of $S_i$, a copy of $S_j$, and the
  (interior) connected sum $S_i\# S_j$ of $S_i$ and $S_j$.  The move consists
 in replacing $S_i$ by $S_i\# S_j$.

 \item The same, but with the following modifications:
 \begin{enumerate}

 \item[i)] $a$ is a simple subarc of $G$ such that all vertices of $G$
 contained in $a$ have valence two, and
 \item[ii)] $S_j$ crosses $G$ in two   points.
\end{enumerate}
\end{enumerate}
 Both moves are called {\em spherical slidings}, see \fullref{slide}. 
Note that spherical slidings do not affect the corresponding root.

\begin{figure}[h]
\labellist
\small
\pinlabel {$S_i$} [l] at 100 685
\pinlabel {$S_j$} [l] at 246 685
\pinlabel {$S_i\# S_j$} [l] at 160 761
\endlabellist
\centerline{\includegraphics[height=4cm]{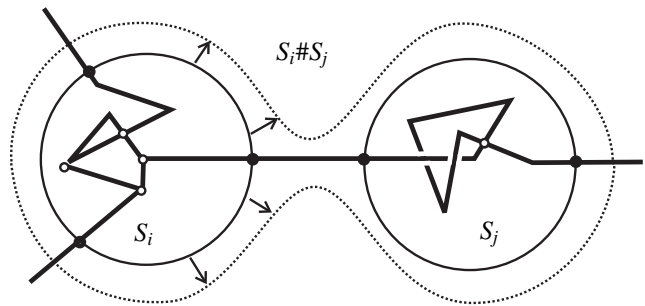}}
  \caption{Spherical sliding}
  \label{slide}
\end{figure}

\begin{definition} Two efficient system in $(M,G)$ are {\em equivalent} if
one system can be transformed into the other by a sequence of
spherical slidings and an isotopy of $(M,G)$.
\end{definition}

The following theorem can be   proved by a  modification of the
proof of property (EE) in \fullref{edgequivKG}.
See~\cite{mpim2}
for details.

\begin{theorem} \label{effic} Any two efficient systems in $(M,G)$ are
 equivalent.\endproof
 \end{theorem}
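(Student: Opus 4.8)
The plan is to imitate the proof of property (EE) in \fullref{edgequivKG}, with spherical slidings now playing the role that elementary equivalence of edges played there. Given two efficient systems $\mathcal S = S_1\cup\cdots\cup S_n$ and $\mathcal S' = S_1'\cup\cdots\cup S_{n'}'$, I would first use an isotopy of the pair $(M,G)$ to put them in a position minimizing the total number $m=\#(\mathcal S\cap \mathcal S')$ of circles of intersection, and then argue by induction on $m$. The spherical slidings of \fullref{slide}, together with isotopy of $(M,G)$, are the elementary moves, and the goal is to show that each reduction of $m$ can be paid for by one such move.

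\emph{Base case} ($m=0$): here $\mathcal S$ and $\mathcal S'$ are disjoint. Both systems reduce $(M,G)$ to the same root $R(M,G)$, unique by \fullref{three}, so the spheres of each system cut $M$ into the same collection of non-trivial (root) chambers together with some trivial chambers. I would encode each system by an ancillary graph recording chambers and the spheres joining them, exactly as in the cutting argument of \fullref{twom}: non-trivial chambers become one colour of vertex, trivial chambers the other, and the spheres become edges. Two disjoint efficient systems with the same root then give graphs with the same non-trivial vertices, and the only remaining freedom is how trivial chambers are attached and how spheres are regrouped. This freedom is precisely what spherical slidings realize: type (1) pushes a sphere along a clean arc through a trivial region, and type (2) slides along a valence-two subarc of $G$ through a chamber met twice by $G$. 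Matching the two graphs by such slidings yields $\mathcal S\sim\mathcal S'$ in the disjoint case.

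\emph{Inductive step} ($m>0$): by an innermost-circle argument I would find a disc $D$ contained in a sphere $S'\in\mathcal S'$ with $D\cap\mathcal S=\p D$ lying on some $S\in\mathcal S$. Compressing $S$ along $D$ produces two spheres; because $S$ is incompressible and every sphere of the systems meets $G$ in at most three points, the same case analysis as in \fullref{edgequivKG} — according to whether $D$ is clean or crosses $G$ in a single point — shows that one of the resulting spheres $\tilde S$ is admissible, incompressible and essential and meets $\mathcal S'$ in fewer than $m$ circles. Replacing $S$ by $\tilde S$ yields a new system $\mathcal S_1$. I would then check that this replacement is realized either by an isotopy of $(M,G)$ or by a single spherical sliding, and that $\mathcal S_1$ is again efficient. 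Since $\#(\mathcal S_1\cap\mathcal S')<m$, the inductive hypothesis gives $\mathcal S_1\sim\mathcal S'$, whence $\mathcal S\sim\mathcal S'$.

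The main obstacle is the bookkeeping in the inductive step: one must verify that each reduction of the intersection number costs exactly one allowable move and, more delicately, that efficiency — condition (2) of the definition — is preserved under the replacement, which requires re-checking that every sphere of $\mathcal S_1$ remains essential in the pair obtained by deleting it. The second subtle point is the base case, where one must prove that two disjoint efficient systems inducing the same root differ only by slidings; this is the combinatorial heart of the argument, and it is carried out in full in~\cite{mpim2}.
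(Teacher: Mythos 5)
Your outline follows exactly the route the paper itself indicates: the paper offers no proof of \fullref{effic} at all, remarking only that it ``can be proved by a modification of the proof of property (EE) in \fullref{edgequivKG}'' and deferring all details to~\cite{mpim2}. So in terms of strategy --- induction on $\#(\mathcal S\cap\mathcal S')$, an innermost-circle/compression step modelled on the (EE) argument, and a separate combinatorial analysis of two disjoint systems with a common root --- you have reconstructed precisely what the authors have in mind, and you have correctly located the two places where the real work lies.

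That said, as written your argument is an outline rather than a proof, and the gap sits exactly where you flag it. In the inductive step you compress a sphere $S\in\mathcal S$ along an innermost disc $D\subset S'\in\mathcal S'$ and propose to replace $S$ by one of the resulting spheres $\tilde S$, asserting that this replacement ``is realized either by an isotopy of $(M,G)$ or by a single spherical sliding.'' A spherical sliding, by definition, replaces $S_i$ by $S_i\#S_j$ where $S_j$ is \emph{another sphere already in the system}; the compression of $S$ along $D$ instead exhibits $S$ as a connected sum $\tilde S\#\tilde S''$ of two spheres neither of which need belong to $\mathcal S$, so the passage from $\mathcal S$ to $\mathcal S_1$ is not a sliding in the allowed sense without further argument (one must show the complementary piece $\tilde S''$ can itself be absorbed, or that $\mathcal S$ and $\mathcal S_1$ have equivalent completions). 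Likewise, preservation of condition (2) of efficiency under the replacement, and the base case matching of two disjoint systems via the ancillary-graph bookkeeping, are each nontrivial theorems in their own right --- the disjoint case is already the substance of the classical uniqueness of sphere systems. None of this makes your approach wrong; it is the approach of~\cite{mpim2}. But the theorem is not proved by the sketch: the steps you defer are the entire content, which is presumably why the authors chose to cite~\cite{mpim2} rather than include the argument.
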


\begin{definition} \label{effroot} A root of $(M,G)$ is {\em efficient}, if
it can be obtained by reducing $(M,G)$ along all spheres of an
efficient system.
\end{definition}

 \begin{theorem}  \label{main1} For any     $(M,G)$   the efficient
 root   exists and is unique up to homeomorphism.
\end{theorem}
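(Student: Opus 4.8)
The plan is to derive both halves of the statement from two facts already in place: the existence of efficient systems observed just after their definition, and \fullref{effic}, which says any two efficient systems in $(M,G)$ are equivalent. For existence, I would take the efficient system $\cal S$ furnished by that construction — begin with any system of disjoint incompressible spheres whose reductions yield a root of $(M,G)$ (such a system realizes the root produced in \fullref{three}), then discard, one after another, every sphere that violates condition (2). This process terminates and preserves condition (1), so $\cal S$ is efficient, and reducing $(M,G)$ along all spheres of $\cal S$ gives an efficient root by \fullref{effroot}. Hence an efficient root exists.

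For uniqueness, let $R$ and $R'$ be efficient roots, obtained by reducing $(M,G)$ along efficient systems $\cal S$ and $\cal S'$ respectively. By \fullref{effic} the systems $\cal S$ and $\cal S'$ are equivalent, so one is carried to the other by finitely many spherical slidings followed by an ambient isotopy of $(M,G)$. I would track the homeomorphism type of the fully reduced pair along this sequence. An isotopy of $(M,G)$ sends $\cal S$ to an isotopic system, and reducing along isotopic systems plainly yields homeomorphic pairs. For a single spherical sliding I would invoke the remark that slidings do not affect the corresponding root: replacing $S_i$ by $S_i\# S_j$ (of either type) alters the system but leaves the pair obtained by reducing along the whole system unchanged up to homeomorphism, since $\partial N(S_i\cup a\cup S_j)$ carries copies of $S_i$, $S_j$ and $S_i\# S_j$, and the reductions can be rearranged to give the same outcome. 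Chaining these steps gives $R\cong R'$.

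The hard part will be making precise that this argument yields genuine \emph{homeomorphism}, rather than the weaker equivalence of \fullref{three} (homeomorphism after removal of trivial components). Concretely, I must verify that no move in the equivalence creates or destroys a trivial component of the reduced pair, and that condition (2) forbids superfluous trivial summands: were the reduction along $\cal S$ to contain an avoidable trivial component, some $S_k$ would fail to be essential in $(M_{{\cal S}\setminus S_k},G_{{\cal S}\setminus S_k})$, contradicting efficiency. Granting this bookkeeping, the reduced pairs along the equivalence are homeomorphic on the nose, so the efficient root is well defined up to homeomorphism and the theorem follows. The required component-tracking is carried out by the same method as the proof of property (EE) in \fullref{edgequivKG}, now applied to whole systems of spheres rather than to single ones; see~\cite{mpim2} for the detailed version.
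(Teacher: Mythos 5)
Your proposal matches the paper's argument: the paper proves \fullref{main1} in one line by combining the existence of efficient systems (noted after their definition), \fullref{effic}, and the observation that spherical slidings do not affect the corresponding root. Your extra bookkeeping about trivial components is a reasonable elaboration of why condition (2) upgrades the conclusion to homeomorphism on the nose, but the route is essentially identical.
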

\begin{proof} This is evident, since spherical slidings of an efficient
system do not affect the corresponding root.
\end{proof}

\begin{remark}
\fullref{main1}
easily implies the Schubert
Theorem on the uniqueness of prime knot decomposition in $S^3$ as well as
the corresponding theorem for knots in any irreducible 3--manifold.
Indeed, the connected components of the efficient root of $(M,K)$ are
exactly the prime factors of the knot $K$.
\end{remark}

\section{Colored knotted graphs and orbifolds}
 Let $\cal C$ be a set of colors. (For example think of $\cal C=\N$.) By a {\em coloring} of a graph $G$ we mean a
 map $\varphi \colon E(G)\to \cal C$, where $E(G)$ is the set of all
 edges of $G$.

 \begin{definition} \label{coloredpair} Let $G_\varphi$ be a colored graph in a
 3--manifold $M$. Then the pair $(M,G_\varphi)$ is called {\em admissible}, if
 there is
 no incompressible sphere in $(M, G_\varphi)$ which crosses $G_\varphi$ transversely in two points of different
 colors.
 \end{definition}
 It follows from the definition that if $(M,G_\varphi)$ is admissible,
 then $G_\varphi$ has no
 valence two vertices    incident to edges of different colors.
We define   reductions along admissible spheres, trivial pairs,
  roots, efficient systems,   spherical slidings, and efficient roots just in the
 same way as for the uncolored case.

 \begin{theorem} \label{maincol} For any admissible pair  $(M,G_\varphi )$
   the
 root   exists and is unique up to color preserving homeomorphisms and removing
 trivial pairs. Moreover, any two efficient systems in $(M,G_\varphi )$ are
 equivalent and thus the efficient root is unique up to  color preserving homeomorphisms.
\end{theorem}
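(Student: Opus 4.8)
The plan is to mimic the uncolored development of Sections on knotted graphs almost verbatim, checking at each stage that the coloring is respected by all moves. First I would verify that the three counting lemmas underlying property (CF) survive the passage to colored graphs. The key point is that admissibility (\fullref{coloredpair}) is preserved under reduction along admissible spheres: cutting along a sphere that meets $G_\varphi$ in at most three points of appropriately matching structure cannot create a bad two-colored incompressible sphere, so the class of admissible pairs is closed under the moves. Since the bounds in \fullref{boundgraph} and \fullref{bound2} count spheres by normal-surface/triangulation data and by clean stars---neither of which sees the coloring---the same arguments give a constant $C$ bounding any sequence of reductions. Hence the complexity function $c(M,G_\varphi)=\,$(maximal length of a reduction sequence) is well defined and orientation-compatible, establishing (CF).

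Next I would reprove property (EE) following the induction in \fullref{edgequivKG} on $m=\#(S_e\cap S_d)$. The base case ($m=0$) is unchanged: reductions along disjoint admissible spheres commute and yield a common root. For the inductive step one compresses $S_e$ along an innermost disc of $S_d$; the crucial check is that the compressing discs and the resulting spheres $S',S''$ remain \emph{admissible in the colored sense}. This is exactly where admissibility must be used: when an innermost disc crosses $G_\varphi$ in one point and splits $S_e$ into two discs each meeting $G_\varphi$, the newly formed sphere $S_e'=D\cup D'$ crosses $G_\varphi$ in at most two points, and admissibility guarantees these are \emph{not} of different colors, so $S_e'$ is genuinely admissible (not merely incompressible). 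With admissibility confirmed the inductive assumption applies and gives $S_e\sim S_d$, yielding (EE). Then \fullref{main} delivers existence and uniqueness of the root up to color-preserving homeomorphism and removal of trivial pairs, which is the first assertion.

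For the second assertion I would invoke the efficient-root machinery. Spherical slidings in the colored setting are defined exactly as before, and the restriction ii) that $S_j$ crosses $G$ in two points is now constrained by admissibility to mean two points \emph{of the same color}; with this reading the slidings preserve colorings and do not affect the corresponding root. The equivalence of any two efficient systems (the colored analogue of \fullref{effic}) is proved by the same modification of the (EE) argument, so the efficient root is well defined and unique up to color-preserving homeomorphism exactly as in \fullref{main1}.

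The main obstacle I anticipate is the admissibility-preservation bookkeeping in the inductive step: one must ensure that every sphere produced by compression stays admissible, and in particular that no two-point intersection of mismatched colors is ever created. This is precisely the content of \fullref{coloredpair}, and the hypothesis that $(M,G_\varphi)$ is admissible is what rules out the problematic configurations, so the argument goes through, but it is the point where the proof genuinely differs from the uncolored case and where care is required.
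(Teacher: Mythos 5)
Your proposal is correct and matches the paper's approach: the paper likewise observes that the uncolored arguments carry over verbatim, with the single exception of the last paragraph of the proof of Lemma~\ref{edgequivKG}, where the incompressible sphere $S_e'$ crossing $G$ in two points needs those points to have the same color --- exactly what admissibility of $(M,G_\varphi)$ guarantees. You identified precisely that spot, so nothing further is needed.
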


\begin{proof}  The proof is literally the same as for the
uncolored case. There is only one place where one should take into
account colorings:
  the last  paragraph   of the proof of \fullref{edgequivKG}.
  Indeed,  in this  paragraph    there appears
an incompressible  sphere   $S_e'$  that
 crosses $G$  in  two points.
We need to know that these points have the same colors, and
exactly for that purpose one has imposed the restriction that the
pair $(M,G_\varphi)$ must be  admissible.     \end{proof}

Further generalizations of the above result consist in specifying    sets
of {\em allowed}  single colors, pairs of colors, and triples of colors.
The idea is to define admissibility of spheres according to whether their
intersection with $G_\varphi$ belongs to one of the specified sets and
allow reductions only along those spheres.  Again, all proofs, in
particular, the proof of the corresponding version of
\fullref{maincol},
are literally the same with only
one exception
where we need that $(M,G_\varphi)$ is admissible. We
naturally obtain a generalized version of the {\em orbifold splitting theorem}
proved recently by C~Petronio~\cite{petr}.

Recall that a 3--orbifold can be described as a pair $(M,G_\varphi)$, where
all vertices of $G_\varphi$ have valence smaller than or equal to 3, all
univalent vertices are in $\partial M$,  and $G_\varphi$ is colored by the
set $\cal C$ of all integer numbers greater than 1.
  We specify the following sets of colors:

We allow no single colors, ie,  we do not perform reductions along
spheres crossing $G_\varphi$ at a single point. The set of allowed pairs
consists of pairs $(n,n), n\ge 2$. The allowed triples are the following:
$(2,2,n), n\geq 2$, and $(2,3,k), 3\leq k\leq 5$. See~\cite{petr} for
background. An orbifold $(M,G_\varphi )$ is called admissible, if it is
admissible in the above sense, ie, if there is no incompressible sphere
in $(M, G_\varphi)$ which crosses $G_\varphi$ transversely in two points
of different colors.

In view of the previous discussion, the following theorem is an easy
consequence of \fullref{maincol}.

  \begin{theorem} \label{orb} For any admissible orbifold  $(M,G_\varphi )$
   the
 root   exists and is unique up to orbifold homeomorphisms and removing
 trivial pairs. Moreover, any two efficient systems in $(M,G_\varphi )$ are
 equivalent and thus the efficient root is
  unique up to orbifold homeomorphism.\endproof
\end{theorem}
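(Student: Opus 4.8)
The plan is to recognize \fullref{orb} as nothing more than the specialization of the generalized version of \fullref{maincol} to the particular sets of allowed colors singled out in the orbifold setting, so that essentially no new argument is required. First I would observe that a 3--orbifold, presented as a pair $(M,G_\varphi)$ in which every vertex of $G_\varphi$ has valence $\leq 3$, every univalent vertex lies in $\partial M$, and the edges are colored by integers $>1$, is a particular instance of a colored knotted graph. The coloring records the cone orders along the edges, so a color-preserving homeomorphism of such a pair is precisely an orbifold homeomorphism, and a trivial pair is precisely an orbifold ball. Thus the conclusion ``unique up to orbifold homeomorphism and removal of trivial pairs'' is literally the conclusion ``unique up to color-preserving homeomorphism and removal of trivial pairs'' of \fullref{maincol}.

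Second, I would check that the notion of admissibility used here matches the one required by the generalized theorem. The allowed single colors are empty, the allowed pairs are exactly $(n,n)$ with $n\geq 2$, and the allowed triples are $(2,2,n)$ and $(2,3,k)$ with $3\leq k\leq 5$; admissibility of the orbifold is the requirement that no incompressible sphere crosses $G_\varphi$ transversely in two points of different colors. This is exactly the hypothesis that the two-point crossings appearing in the framework are of allowed type. With allowed spheres defined as the admissible (incompressible, non-trivial) spheres whose intersection with $G_\varphi$ belongs to one of the specified sets, the entire machinery --- the Kneser-type bound \fullref{bound2} giving property (CF), the inductive argument of \fullref{edgequivKG} giving property (EE), and hence \fullref{main} --- applies verbatim, yielding existence and uniqueness of the root. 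Likewise the equivalence of efficient systems (\fullref{effic}) and the resulting uniqueness of the efficient root (\fullref{main1}) carry over unchanged.

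The only place where the argument is not completely automatic, and the single step I would check with care, is the closing paragraph of the proof of \fullref{edgequivKG}: there the induction produces an incompressible sphere $S_e'$ that meets $G_\varphi$ in exactly two points, and for the reduction along $S_e'$ to be permitted one must know that these two points carry an allowed pair of colors. In the present setting this is guaranteed precisely by admissibility of $(M,G_\varphi)$, which forbids an incompressible two-point sphere with mismatched colors; hence $S_e'$ is an allowed sphere and the inductive step goes through. I would also note that the structural constraints on orbifold loci (valences $\leq 3$, univalent vertices in the boundary) together with the chosen triples $(2,2,n)$ and $(2,3,k)$ ensure that reductions along allowed spheres stay within the class of orbifolds and that trivial pairs are genuine orbifold balls, so that the whole discussion remains internal to orbifolds. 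Granting these routine verifications, \fullref{orb} follows immediately from \fullref{maincol}.
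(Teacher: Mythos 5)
Your proposal is correct and follows exactly the route the paper takes: Theorem~\ref{orb} is deduced as a specialization of the generalized version of Theorem~\ref{maincol} to the orbifold color sets, with the only non-automatic point being the final paragraph of the proof of Lemma~\ref{edgequivKG}, where admissibility of $(M,G_\varphi)$ is needed to guarantee that the constructed two-point sphere carries an allowed pair of colors. No substantive difference from the paper's argument.
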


\section{Roots versus prime decompositions }
In \fullref{milnor}
we have seen that   the existence and
uniqueness of roots of 3--manifolds with respect to spherical
reductions (that is, {\em spherical roots}) is very close to the
existence and uniqueness of prime decompositions with respect to
connected sums. Indeed, to get the disjoint union of prime factors
one should merely perform as long as  possible spherical
reductions along {\em separating} essential spheres. The same  is
true   for prime decompositions of irreducible manifolds with
respect to boundary connected sums and for prime decompositions of
knotted graphs and orbifolds with respect to taking connected sums
(which are inverse operations to reductions along separating
essential spheres).

In contrast to that, the uniqueness of prime factors requires
additional arguments.  For  decompositions of 3--manifolds into
connected sums and boundary connected sums such arguments are
given in Sections \ref{S-roots}
and~\ref{D-roots}.
The uniqueness
of prime decompositions of knotted graphs  is not known yet.

\begin{Conjecture}Any knotted graph  is a connected sum of
prime factors. The  factors   are determined uniquely up to
homeomorphism.
\end{Conjecture}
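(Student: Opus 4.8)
\textbf{Proposed approach}\qua The plan is to imitate the two--step method of \fullref{S-roots}, separating existence from uniqueness. Existence is the easy half: as noted at the end of the present section, reductions along \emph{separating} essential spheres are exactly inverse to taking connected sums, so performing such reductions as long as possible writes $(M,G)$ as a connected sum of factors that carry no separating essential sphere, that is, of prime factors. The process terminates because, by \fullref{bound2}, the number of essential spherical reductions of $(M,G)$ is bounded by a constant depending only on $(M,G)$.

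For uniqueness I would copy the manifold argument rather than invent a new one. There one does \emph{not} establish property (EE) for separating spheres alone; instead one uses the unique full root --- here the efficient root of \fullref{main1}, whose connected components are precisely the factors admitting no essential sphere at all --- and then counts the remaining ``prime but reducible'' summands by a separate invariant. For a closed orientable $3$--manifold the only such pieces are $S^2{\times}S^1$ and $S^2{\tilde\times}S^1$, and their number $n$ was pinned down as the rank $r(M)$ of the group $\Sigma(M)\subset H_2(M;Z_2)$ of spherical classes. The statement one hopes for would then read: $(M,G)$ is the connected sum of the components of its efficient root together with a definite number of prime reducible knotted graphs, that number being read off from an analogous invariant.

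The decisive obstacle is supplying that invariant, or equivalently controlling reductions along \emph{non}--separating admissible spheres. Two ingredients are missing. First one must classify the prime reducible knotted graphs --- the pairs admitting no separating essential sphere yet still a non--separating one; unlike the manifold case, where the list is just the two $S^2$--bundles over $S^1$, the possible intersection patterns of an admissible sphere with $G$ (up to three crossing points, located at edges or at the cone--point stars) leave even this list unclear. Second one needs a homological or combinatorial count of these factors playing the role of $r(M)$, and the ordinary homology of $M$ simply does not see the graph finely enough to furnish one.

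This is exactly why the machinery already succeeds for \emph{knots} in an irreducible ambient manifold: there no prime reducible factors intervene, the components of the efficient root \emph{are} the prime factors, and \fullref{main1} recovers Schubert's theorem with no counting step at all. For graphs with genuine vertices I expect any attempt along these lines to stall precisely at the classification and enumeration of the prime reducible pieces --- which is why the uniqueness half of the statement remains, as indicated in the text preceding it, only conjectural.
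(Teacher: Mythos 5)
You have correctly recognized that this statement is labelled a \emph{Conjecture} in the paper and that no proof is given: the text immediately preceding it states that ``the uniqueness of prime decompositions of knotted graphs is not known yet,'' so there is nothing in the paper to compare your argument against. Your sketch of the existence half (perform reductions along separating essential spheres as long as possible, with termination guaranteed by \fullref{bound2}) matches what the paper indicates in the section ``Roots versus prime decompositions,'' where it is observed that existence of a prime decomposition follows from reductions along separating essential spheres and that only uniqueness requires additional arguments. Your diagnosis of where the manifold strategy breaks down --- the need to classify the prime reducible knotted graphs and to produce an invariant counting them, in analogy with the rank $r(M)$ of the spherical subgroup $\Sigma(M)\subset H_2(M;Z_2)$ that counts the $S^2{\times}S^1$ factors in the closed orientable case --- is a reasonable and honest account of the obstruction, and is consistent with the paper's remark that the components of the efficient root give Schubert's theorem for knots precisely because no prime reducible factors intervene there. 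In short: you have not proved the statement, but neither does the paper, and your assessment that the uniqueness half is genuinely open is the correct one.
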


The  uniqueness of prime decompositions of orbifolds  is also unsettled.
The main result of~\cite{petr} does not solve the problem, since it works
only for orbifolds without non-separating 2--suborbifolds. So the following
conjecture remains unsettled.

\begin{Conjecture}Any 3--dimensional orbifold  is a
connected sum of prime factors. The  factors   are determined
uniquely up to homeomorphism.
\end{Conjecture}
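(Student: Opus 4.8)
The plan is to imitate the two--step strategy of \fullref{milnor}, first settling existence and uniqueness of the \emph{root} and only afterwards passing to a genuine prime decomposition. Existence of a prime decomposition is the easy half: one performs reductions along separating admissible essential spheres for as long as possible, which terminates by the same finiteness bound that underlies \fullref{orb}, and each such reduction splits off a prime summand. This yields a decomposition $(M,G_\varphi)=P_1\#\cdots\#P_k$ into prime orbifolds, and this direction should be routine.

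For uniqueness I would again isolate the irreducible factors from the reducible--but--prime ones. The efficient root of \fullref{orb} is unique up to orbifold homeomorphism, and its connected components are exactly those prime factors admitting no non--trivial reduction; so the uniqueness of this part is already guaranteed. The remaining task is to show that the number and homeomorphism types of the \emph{reducible} prime factors --- those split off by reductions along \emph{non--separating} admissible spheres --- are determined by $(M,G_\varphi)$. For closed $3$--manifolds this was done homologically: the rank $r(M)$ of the spherical subgroup $\Sigma(M)\subset H_2(M;\mathbb{Z}_2)$ counts the $S^2{\times}S^1$ summands, and the Sphere Theorem forces $r=0$ on the irreducible part, giving $n=r(M)$.

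The main obstacle is precisely this last step, and it is exactly where \fullref{orb} does not suffice and where \cite{petr} already fails, since that argument is restricted to orbifolds without non--separating $2$--suborbifolds. Two difficulties arise. First, a non--separating essential sphere may cross the singular graph $G_\varphi$, so the summand it splits off need not be a single orbifold analogue of $S^2{\times}S^1$: there can be infinitely many reducible prime orbifolds, distinguished by how $G_\varphi$ threads the sphere, and one needs both a classification of these and an invariant counting each type. Second, the homological bookkeeping breaks down: there is no direct analogue of the Sphere Theorem yielding asphericity of an irreducible orbifold with non--empty singular locus, hence no immediate substitute for the equality $n=r(M)$. A successful proof would therefore require either an equivariant orbifold Sphere Theorem or a direct combinatorial invariant counting non--separating reductions that is stable under the elementary equivalences used in \fullref{edgequivKG}; constructing such an invariant is the crux of the conjecture and the reason it remains open.
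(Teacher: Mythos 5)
The statement you were given is explicitly labelled a \emph{Conjecture} in the paper, and the authors state in the surrounding text that the uniqueness of prime decompositions of orbifolds ``is also unsettled'' and that the main result of Petronio's paper does not resolve it because it applies only to orbifolds without non-separating $2$--suborbifolds. There is therefore no proof in the paper for me to compare your attempt against, and, to your credit, you have not pretended to supply one: your text is a programme plus a diagnosis of the obstruction rather than a proof.

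Your diagnosis is accurate and consistent with the paper's own assessment. The existence half is indeed routine (reduce along separating admissible essential spheres until the finiteness bound of Lemma~\ref{bound2}/Theorem~\ref{orb} stops you), and the efficient root of Theorem~\ref{orb} does pin down the irreducible factors up to orbifold homeomorphism. The genuine gap is exactly where you place it: controlling the prime-but-reducible summands split off by non-separating admissible spheres. In the manifold case the paper disposes of these with the rank of the spherical subgroup of $H_2(M;Z_2)$ together with the Sphere Theorem, and both ingredients lack straightforward orbifold analogues once the sphere is allowed to meet the singular locus; moreover there is no longer a single reducible prime model playing the role of $S^2{\times}S^1$. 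So your proposal correctly identifies why the passage from the root theorem to the full prime decomposition theorem fails here, but it does not close that gap, and neither does the paper --- the statement remains a conjecture.
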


\section{Annular roots of manifolds} In addition to spherical and
and disc ($S$-- and $D$--) reductions from
Sections~\ref{milnor} and~\ref{D-roots}
we introduce $A$--reductions (reductions along   annuli).
\begin{definition}
 Let $A$ be an   annulus
  in  $M$ such that its boundary
circles lie in different components of $\partial M$. Then we cut
$M$ along $A$ and attach two plates $D_1^2{\times} I, D_2^2{\times} I$
by identifying their base annuli $\partial D_1^2{\times} I,
\partial D_2^2{\times} I$  with the two copies of $A$, which appear
under cutting.
\end{definition}

A reduction along an annulus $A\subset M$ is called {\em trivial}, if $A$
is  compressible, and non-trivial otherwise. Note that incompressible
annuli having boundary circles in different components of $\p M$ are
automatically essential, ie, not only incompressible, but also  boundary
incompressible.  It makes little sense to consider annular reductions
separately from spherical and disc ones. We will use them together calling
them $SDA$--{\em reductions}. As above, we begin by introducing an oriented graph
$\Gamma$. The set of vertices of $\Gamma$ is defined to be the set of all
compact 3--manifolds, but considered up to homeomorphism and removing
connected components homeomorphic to $S^3$ or $D^3$. Two vertices
$M_1,M_2$ are joined by an edge oriented from $M_1$ to $M_2$ if $M_2$ can
be obtained from $M_1$ by a non-trivial $SDA$--reduction.

Our next goal is to prove that $\Gamma$ possesses properties (CF)
and (EE). The inductive proof of property (EE) is based on the
following lemma, which
  helps us to settle the base of induction.

\begin{lemma}\label{anntrick}
 Let a 3--manifold $N$ be obtained from a 3--manifold $M$
 by annular reduction along  a compressible  annulus $A\subset M$.
   Then $N$ contains a sphere $S$ such that the
   spherical reduction $N_S$ of $N$ can be obtained from $M$ by
   cutting along two disjoint proper discs.
 \end{lemma}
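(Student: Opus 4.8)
The plan is to read off the required sphere directly from a compressing disc for $A$, and to take the two proper discs to be the result of compressing $A$ along that disc.

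First I would fix notation for the annular reduction. Since $A$ is compressible there is a disc $D_0\subset M$ with $D_0\cap A=\p D_0$ equal to the core circle $\gamma$ of $A$, meeting $A$ only along $\p D_0$ and (near $\gamma$) lying to one side of $A$. Write $M'$ for $M$ cut along $A$, with $A_+,A_-$ the two copies of $A$ in $\p M'$, so that $N=M'\cup P_+\cup P_-$, where each plate $P_\pm=D^2{\times}I$ is glued to $A_\pm$ along its base annulus $\p D^2{\times}I$. Because $\Int D_0$ is disjoint from $A$, the disc $D_0$ survives the cut and sits in $M'$ as a proper disc whose boundary is the core of (say) $A_-$. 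The meridian disc $E=D^2{\times}\{1/2\}$ of the plate $P_-$ has $\p E=\p D^2{\times}\{1/2\}$, which is exactly the core of $A_-$, i.e.\ $\p E=\p D_0$. Hence $S:=D_0\cup E$ is an embedded $2$--sphere in $N$, and this is the sphere in the statement.

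Next I identify the two discs. Compressing $A$ along $D_0$ --- that is, cutting $A$ along $\gamma$ and capping the two resulting circles with two parallel copies of $D_0$ --- yields two \emph{disjoint} properly embedded discs $\delta_1,\delta_2\subset M$ whose boundaries are the two boundary circles $c_1,c_2$ of $A$; these lie in different components of $\p M$ by hypothesis, so the $\delta_i$ are honest proper discs. I claim $N_S$ is homeomorphic to $M$ cut along $\delta_1\cup\delta_2$.

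Finally, the verification. The entire construction is supported in a regular neighborhood $V=N(A\cup D_0)$ together with a collar of $\p M$ meeting $c_1,c_2$; outside $V$ nothing is altered. Writing $A=A_1\cup A_2$ for the two sub-annuli of $A$ on either side of $\gamma$, the complex $A\cup D_0$ is collapsible ($A_1$ collapses onto $\gamma$, then $A_2$ onto $\gamma=\p D_0$, then $D_0$ to a point), so $V$ is a $3$--ball and the claim reduces to an explicit computation in this local ball model. Cutting $N$ along $S$ splits $P_-$ along $E$ into two half--plate balls and splits $M'$ along $D_0$; capping the two new spheres with balls precisely reverses the attachment of $P_-$ and reconstitutes the effect of deleting regular neighborhoods of $\delta_1$ and $\delta_2$. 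The main obstacle is exactly this bookkeeping: one must check that the two capping balls of the spherical reduction coincide with the two ball regions that $\delta_1,\delta_2$ cut off, and that the remaining plate $P_+$ together with $M\setminus V$ is reassembled in the same way by both operations. Carrying this out in the local ball model (as one can already confirm on the test case $M=S^2{\times}I$, $A=\gamma{\times}I$, where both sides equal $B^3\sqcup B^3\sqcup(S^2{\times}I)$) gives that $N_S$ agrees with $M$ cut along $\delta_1\cup\delta_2$, up to the balls that are discarded in $\Gamma$ anyway.
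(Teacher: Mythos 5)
Your construction is the same as the paper's: the sphere $S$ is the union of the compressing disc with a meridian disc of one attached plate, and your discs $\delta_1,\delta_2$ (the compression of $A$ along $D_0$) coincide, up to isotopy, with the paper's $D',D''$, which arise as the two disc components of the frontier of a regular neighborhood of $A\cup D$. The paper simply asserts the final identification of $N_S$ with $M$ cut along these discs as ``easy to see,'' so your additional local-model bookkeeping is a correct elaboration of the same argument rather than a different route.
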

 \begin{proof}
  Let  $D $ be a
 compressing disc for $A$.
 Denote by $U$ a closed regular neighborhood of $A\cup D$ in $M$. Then the
  relative boundary $\partial_{rel}U=\Cl(\partial N\cap \Int M)$
   consists of a parallel
 copy of $A$ and two proper discs $D', D''$. Denote by $S$
 a 2--sphere in
 $N$ composed from a copy of $D$ and a core disc of one of
 the attached
 plates, see \fullref{ddas}. 
 Then it is easy to see that the manifold
 $N_S$ obtained from $N$ by spherical reduction along $S$ is
 homeomorphic to  the result of cutting $M$ along $D'$ and $D''$.
\end{proof}

\begin{figure}[h]
\labellist
\small
\pinlabel {$S$} [l] at 100 692
\pinlabel {$A$} [l] at 127 595
\pinlabel {$A$} [l] at 127 722
\pinlabel {$D$} [l] at 209 626
\pinlabel {$D'$} [l] at 111 658
\pinlabel {$D''$} [l] at 215 695
\endlabellist
\centerline{\includegraphics[height=3.5cm]{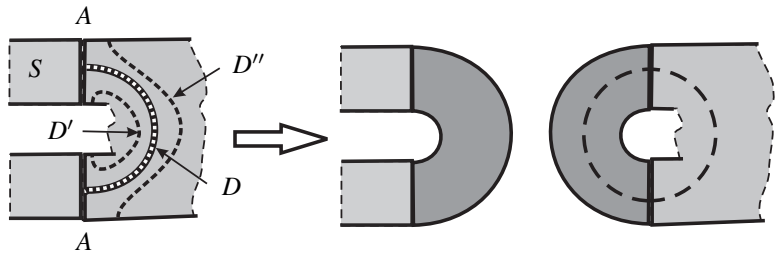}}
  \caption{Reduction along compressible annulus }
  \label{ddas}
\end{figure}

\begin{lemma}\label{lm:all equiv}
$\Gamma$ possesses properties (CF) and
 (EE).
\end{lemma}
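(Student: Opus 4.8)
The plan is to follow the template established in \fullref{edgequiv} and \fullref{edgequivD}: verify property (CF) by exhibiting an explicit complexity function that strictly decreases along every edge, and verify property (EE) by the induction-on-intersections argument, the only genuinely new ingredient being the treatment of annuli.

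For (CF), I claim the very same complexity function as in \fullref{edgequivD} works, namely $c(M)=(g^{(2)}(\p M),\,s(M))\in\N^2$ in lexicographic order, where $s(M)$ is the maximal length of a sequence of non-trivial spherical reductions (finite by \fullref{boundg}). Spherical and disc reductions are handled exactly as before: a spherical reduction leaves $\p M$ and hence $g^{(2)}(\p M)$ unchanged while dropping $s(M)$, and a non-trivial disc reduction lowers $g^{(2)}(\p M)$. The new point is the annular case. A non-trivial $A$--reduction is performed along an incompressible annulus $A$ whose boundary circles $c_1\subset F_1$, $c_2\subset F_2$ lie on distinct components of $\p M$; incompressibility forces $c_1,c_2$ to be essential on $F_1,F_2$. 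Tracing through the definition of the reduction (cut along $A$, attach the two plates), one sees that the net effect on the boundary is to compress $F_1$ along $c_1$ and $F_2$ along $c_2$ and cap the resulting circles with discs, the plates contributing only interior material. Compressing a closed surface along an essential curve strictly lowers $g^{(2)}$: if the curve is non--separating the genus of that component drops from $g\ge 1$ to $g-1$; if it is separating, the component of genus $g$ is replaced by two components of genera $a,b\ge 1$ with $a+b=g$, so that $a^2+b^2<g^2$. Hence every non-trivial $A$--reduction strictly lowers $g^{(2)}(\p M)$, and $c$ is a complexity function.

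For (EE), I would argue exactly as in \fullref{edgequivD}, by induction on the number $m$ of curves and arcs in $F_e\cap F_d$, after the reducing surfaces $F_e,F_d$ (each a sphere, disc, or annulus) have been isotopically shifted so that $m$ is minimal. In the base case $m=0$ the two reductions commute: reducing successively along the two disjoint surfaces yields a single manifold $N$ independent of the order, and one checks that any root of $N$ is a common root of $M_e$ and $M_d$, the sub-cases in which a surface becomes trivial after the companion reduction being absorbed into the vertex identifications as in \fullref{edgequivD}. The inductive step uses an innermost-circle argument to compress along circles of intersection and an outermost-arc argument to boundary-compress along arcs, in each case producing an intermediate essential surface meeting the companion surface in fewer curves, so that the inductive hypothesis applies and yields $e\sim d$.

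The main obstacle is the annular part of the base case. When $F_e$ is an incompressible annulus and the companion reduction along $F_d$ renders it compressible, the annular reduction along $F_e$ can no longer be performed non-trivially, so the naive ``reduce, then reduce the survivor'' argument breaks down. This is precisely what \fullref{anntrick} is designed to repair: it converts the reduction along a compressible annulus into a spherical reduction together with a cut along two disjoint proper discs, thereby re-expressing the annular move in terms of $S$-- and $D$--reductions and pushing the verification back into the already-settled $SD$--framework of \fullref{edgequivD}. Carrying out this conversion carefully in each configuration, and checking that the intermediate surfaces produced in the inductive step remain essential so that the induction is legitimate, is where the real work lies; once it is done, both properties (CF) and (EE) are established.
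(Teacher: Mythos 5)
Your treatment of (CF) is correct and matches the paper (which simply reuses $c(M)=(g^{(2)}(\p M),s(M))$ from \fullref{edgequivD}; your explicit check that a non-trivial annular reduction amounts to compressing two boundary components along essential curves, hence strictly lowers $g^{(2)}$, is the right justification). Your base case for (EE), including the use of \fullref{anntrick} when one or both annuli become compressible after the companion reduction, also tracks the paper's argument.

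The gap is in the inductive step. You assert that an innermost-circle argument for circles and an outermost-arc argument for arcs always produce an intermediate essential surface meeting the companion in fewer curves. That is exactly what fails when both $F_e$ and $F_d$ are essential annuli: after trivial circles and trivial arcs have been removed, the remaining intersection consists either of circles parallel to the cores of both annuli or of arcs joining the two distinct boundary circles of each annulus, and then there is no innermost circle (a core-parallel circle bounds no disc on an annulus) and no outermost arc cutting off a half-disc. This is precisely where the real content of the lemma lies, and the paper spends Cases 1--4 on it: building a new essential annulus $X$ out of subannuli of $F_e$ and $F_d$ in the circle case, a cut-and-paste on quadrilaterals between neighboring spanning arcs (with separate treatment of the two possible crossing directions), a regular-neighborhood argument when $F_e\cap F_d$ is a single spanning arc, and the exceptional case $M=S^1{\times}S^1{\times}I$. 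A further sign that your step cannot be repaired by routine surface arguments alone: the paper's figure-eight-knot example shows that (EE) is \emph{false} if one drops the hypothesis that the boundary circles of each annulus lie in different components of $\p M$, yet your proposed inductive step never uses that hypothesis, whereas the paper's Cases 1 and 3 rely on it essentially. So the proposal, as written, defers rather than supplies the decisive part of the proof.
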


\begin{proof} Property (CF) is easy:\\ The complexity function
$c(M)=(g^{(2)}(\partial M), s(M))$ introduced in
the proof of \fullref{edgequivD}
works.

To prove property (EE), consider two
 surfaces in $M$   corresponding to
edges $e,d$ of $\Gamma$. Each surface is either a non-trivial
sphere or disc, or an incompressible annulus having boundary
circles in different components of $\p M$. We  prove
 the equivalence $e\sim d$ by induction on the number
 $ \# (F_e\cap F_d)$ of curves
 in the intersection assuming that the  surfaces have been
 isotopically shifted so that this number   is minimal.

 {\bf Base of induction}\qua  Let $m=0$, ie~$ F_e\cap F_d=\emptyset$.
Denote by $M_e,M_{d}$ the manifolds obtained by reducing $M$
 along $F_e,F_d$, respectively. Since $ F_e\cap F_d=\emptyset$,
 $F_d$ survives the reduction along $F_e$ and thus may be
 considered as
 a surface in $F_e$. Let $N$ be obtained by  reduction of
 $F_e$ along $F_d$. Of course,
 reduction of $ M_{d} $  along  $F_e $
 also gives $ N $.   We claim that there is a  root $R$ of $N$ which
 is a
 common root of  $ M_{e} $  and $ M_{d}$ (and hence the edges
 $e,d$ are
 elementary equivalent).

  Indeed, if both surfaces
$F_e\subset M_d,F_d\subset M_e $ are non-trivial, then any root of $N$ is a
common root of $M_e$ and $M_d$. If one of them is a trivial sphere or a
trivial disc, then the same tricks as in the proofs of
Lemmas~\ref{edgequiv}
and~\ref{edgequivD}
do work.

Suppose that one of the surfaces (let $F_e$) is a trivial (ie
compressible) annulus. Then we apply \fullref{anntrick}
and get
a manifold $N'$ such that any root of $N'$ is a common root of
$M_e$ and $M_d$.

Now we suppose that both annuli $F_e,F_d$ are trivial. Then
we apply  \fullref{anntrick}
twice: construct two manifolds $N', N''$ such that each can be obtained
from $M_e$ (respectively, $M_d$) by cutting along two disjoint discs.
Since both $N',N''$ are spherical reductions of $N$, they have a common
root by \fullref{edgequiv}.

 {\bf Inductive assumption}\qua Any two edges $e,d$ with
$\# (F_e\cap F_d)\leq m$ are equivalent.

{\bf Inductive step}\qua Suppose that $ \# (F_e\cap F_d)\leq m+1$.
  We may assume that $F_e\cap F_d$ contains no trivial circles and
trivial arcs. Otherwise we could apply an innermost circle or an outermost
arc argument just as in the proof of Lemmas~\ref{edgequiv}
and~\ref{edgequivD}.
It follows that $F_e$ and $ F_d$ are annuli such
that $F_e\cap F_d$ consists either of non-trivial circles (which are
parallel to the core circles of the annuli) or of non-trivial arcs (which
join different boundary circles of the annuli).

 First we  suppose that  $F_e\cap F_d$ consists of non-trivial circles of $F_e$ and $F_d$.
 Then one can find two different components $A, B$ of $\partial M$
 such that
 a circle of $\partial F_e$ is in $A$ and a circle
 of $\partial F_d$ is in $B$. Denote by $s$
  the first circle of $F_e\cap F_d$ we meet at our radial
  way along $F_e$ from the
  circle $\partial F_e\cap A$ to the other boundary circle of $F_e$.
  Let $F_e'$ be the subannulus
  of $F_e$ bounded by   $\partial F_e\cap A$ and $s$, and
     $F_d'$   the subannulus of $F_d$ bounded by
         $s$ and $\partial F_d\cap B$.
      Then the
 annulus $F_e'\cup F_d'$ is essential and is isotopic to an
 annulus $X$ such that $\#(X\cap F_e)<m$ and
 $\#(X\cap F_d)=0$, see \fullref{newannulus}
  (to get a real picture, multiply by
 $S^1$). It follows from the inductive assumption
 that $ e\sim x \sim  d$,
 where $x$ is the edge corresponding to the annulus $X$.

\begin{figure}[ht]
\labellist
\small
\pinlabel {$F_d$} [l] at 224 661
\pinlabel {$F_e$} [l] at 286 527
\pinlabel {$X$} [l] at 330 683
\pinlabel {$A$} at 286 746
\pinlabel {$B$} [l] at 404 642
\endlabellist
\centerline{\includegraphics[height=5.6cm]{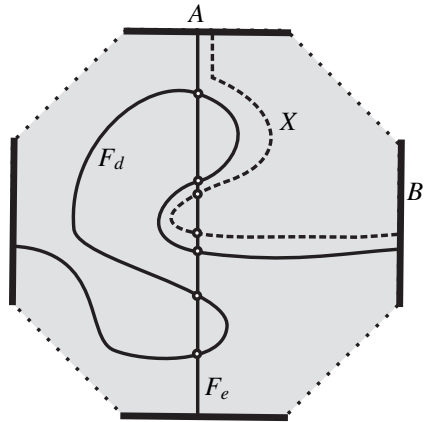}}
  \caption{$X$ is disjoint with  $F_d$ and
  crosses $F_e$ in a smaller number of circles.}
  \label{newannulus}
\end{figure}

  Now we suppose  that $F_e\cap F_d$ consists
  of more than one  radial segments, each having endpoints in
  different  components of $\partial F_e$ and
  different  components of $\partial F_d$.
Let $s_1,s_2  \subset F_e\cap F_d\subset F_e$ be
    two neighboring segments.  Denote
    by $D$ the
quadrilateral part  of $F_e$
     between them.

  {\bf Case 1}\qua  First we assume that
    $F_d$ crosses $F_e$ at  $s_1,s_2$  in opposite directions.
    This means that each part of
    $F_d\setminus (s_1\cup s_2) $ approaches $D$
     from the same side. Then we cut
        $F_d$ along $s_1,s_2$ and attach to it
         two parallel copies of $D$
         lying on different
        sides of $F_e$. We get a new surface $F_d'$
          consisting of two disjoint annuli, at least
 one of which (denote it by $X$) is essential, see \fullref{lick}
 to the left. The real picture showing the behavior of the
 annuli in a neighborhood of $D$ can be obtained by multiplying
by $I$. Since $\#(X\cap F_e)\leq m-2$ and, after a small isotopy of $X$,
$\#(X\cap
 F_d)=\emptyset$, we get  $ e\sim x \sim  d$,
 where $x$ is the edge corresponding to the annulus $X$.

\begin{figure}[h]
\labellist
\small
\pinlabel {$s_1$} [l] at 130 619
\pinlabel {$s_1$} [l] at 345 620
\pinlabel {$s_2$} [l] at 102 705
\pinlabel {$s_2$} [l] at 315 705
\pinlabel {$F_d$} [l] at 52 705
\pinlabel {$F_d$} [l] at 263 704
\pinlabel {$F_e$} [l] at 104 591
\pinlabel {$F_e$} [l] at 319 591
\pinlabel {$X$} [l] at 402 707
\endlabellist
\centerline{\includegraphics[height=3.6cm]{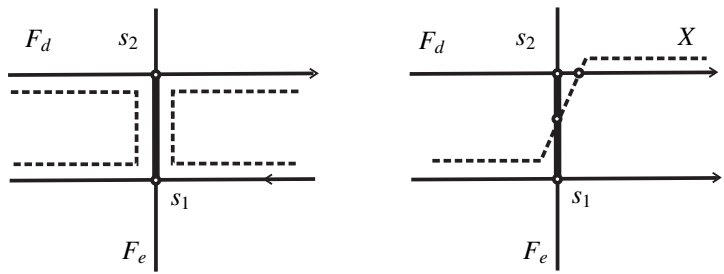}}
  \caption{$X$ crosses each annulus in a smaller number of segments.}
  \label{lick}
\end{figure}

{\bf Case 2}\qua We assume now   that at all segments $F_d$ crosses $F_e$ at
$s_1,s_2$ in the same direction (say, from left to right). Then $s_1,s_2$
decompose $F_d$ into two strips $L_1,L_2$ such that $L_1$ approaches $s_1$
from the left side of $F_e$ and   $s_2$ from the  right side. Then the
annulus $L_1\cup D$ is isotopic to an annulus $X$ such that $\#(X\cap
F_e)\leq m-1$ and $\#(X\cap F_d)=1$, see \fullref{lick}
to the right.
Since $X$ crosses $F_e$ one  or more times in the same direction, it is
essential.
 Therefore, we get  $ e\sim x \sim  d$ again.

{\bf Case 3}\qua   Suppose   $M$ is not homeomorphic to $S^1{\times}
S^1{\times} I$ and $F_e$ and $F_d$ are annuli such that $F_e\cap
F_d$ consists of one radial segment. Denote by $F_d'$ the relative
boundary $\partial_{rel}(N)=\Cl(\p N\cap \Int M)$ of a regular
neighborhood $N=N(F_e\cup F_d)$ in $M$. Then $F_d'$ is an annulus
having boundary circles in different components of $\partial M$.

{\bf Case 3.1}\qua If $F_d'$ is incompressible, then we put $X=F_d'$.

{\bf Case 3.2}\qua   If $F_d'$ admits a compressing disc $D$,   then
the relative boundary of a regular neighborhood $N=N(F_d'\cup D)$
consists of a parallel copy of $F_d'$ and two proper discs
$D_1,D_2$. If at least one of these discs (say, $D_1$) is
essential, then we put $X=D_1$.

 {\bf Case 3.3}\qua Suppose that the  discs $D_1, D_2$ are not
 essential. Then the circles $\partial D_1, \partial D_2$ bound discs
 $D_1',D_2'$ contained in the
 corresponding components of $\partial M$.  We
 claim that at least one of the spheres $S_1=D_1\cup D_1',S_2=D_2\cup D_2'
 $ (denote it by $X$)  must be essential.
  Indeed, if both bound balls, then $M$ is
 homeomorphic to $S^1{\times} S^1{\times} I$, contrary to  our  assumption.

 In all three cases 3.1--3.3, $X$ is disjoint to $F_e$ as well as
 to $F_d$. Therefore,  $ e\sim x \sim  d$,
 where $x$ is the edge corresponding to the annulus $X$.

{\bf Case 4}\qua This is the last logical possibility. Suppose that
$M= S^1{\times} S^1{\times} I$. Then $e\sim d$ since reducing $M=
S^1{\times} S^1{\times} I$ along any incompressible  annulus having
boundary circles in different components of $M$ produces the same
manifold  $S^2{\times} I$.
\end{proof}

\begin{theorem} \label{annuli} For any 3--manifold $M$
the $SDA$--root $R(M)$ exists and is unique up to homeomorphism and
removal of spherical and ball connected components.
\end{theorem}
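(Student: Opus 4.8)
The plan is to derive the statement as an immediate corollary of the abstract root theorem \fullref{main}, in exactly the manner used for the spherical root in \fullref{two}. By this stage all the substantive work has already been done, both in setting up the relevant graph $\Gamma$ and in \fullref{lm:all equiv}; what remains is merely to read off the conclusion of \fullref{main} in the present $SDA$--setting and to match the notion of uniqueness with the equivalence used to define the vertices of $\Gamma$.

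First I would recall the oriented graph $\Gamma$ introduced just before \fullref{lm:all equiv}. Its vertices are all compact $3$--manifolds taken up to homeomorphism and up to removal of connected components homeomorphic to $S^3$ or $D^3$, and its oriented edges record precisely the non-trivial $SDA$--reductions. By \fullref{lm:all equiv}, this graph possesses both property (CF) and property (EE). I would then simply invoke \fullref{main}: it guarantees that every vertex of $\Gamma$ admits a unique root. Translating back, this says that for every compact $3$--manifold $M$ the $SDA$--root $R(M)$ exists and is unique, where ``unique'' is understood modulo exactly the equivalence built into the vertex set, namely homeomorphism together with removal of spherical ($S^3$) and ball ($D^3$) components. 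This is precisely the asserted conclusion.

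The only genuine difficulty lies entirely upstream, in the verification of property (EE) carried out in \fullref{lm:all equiv}, and is therefore not an obstacle for the theorem itself. The delicate points there were the base of the induction when both $F_e$ and $F_d$ are compressible annuli---resolved via \fullref{anntrick} by trading each annular reduction for a pair of disc reductions and then comparing the two resulting spherical reductions through \fullref{edgequiv}---and the four-case analysis of how two essential annuli can intersect along non-trivial circles or radial arcs. Granting \fullref{lm:all equiv}, the present theorem is a one-line consequence of \fullref{main}, and I would write its proof as such, with no further case work required.
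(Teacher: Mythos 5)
Your proposal is correct and matches the paper's own proof, which is exactly the one-line deduction from \fullref{main} and \fullref{lm:all equiv} that you describe. The additional commentary on where the real work lies (property (EE) in \fullref{lm:all equiv}) is accurate but not needed for the proof itself.
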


\begin{proof} This follows from \fullref{main} and
\fullref{lm:all equiv}.
\end{proof}

  It turns out that the condition on boundary circles of   annuli to lie in
different components of $\partial M$ is essential. Below we
present an example of a 3--manifold $M$ with two incompressible
boundary incompressible annuli $A,B\subset M$ such that $\partial
M$ is connected and  reductions of $M$ along $A$ and along $B$
lead us to two different 3--manifolds admitting no further
essential reduction, ie to two different  ``roots''.

\begin{Examplenn}Let $Q$ be the complement space of the figure-eight
knot. We assume that the torus $\partial Q$ is equipped with a coordinate
system such that the slope of the meridian is (1,0). Choose two pairs
$(p,q)$, $(m,n)$ of coprime integers such that $|q|,|n|\geq 2$ and
$|p|\neq |m|$. Let $a$ and $b$ be corresponding curves in $\partial Q$.
Then the manifolds $Q_{p,q}$ and $Q_{m,n}$ obtained by Dehn filling of $Q$
are not homeomorphic. By Thurston~\cite{Th}, they are hyperbolic.

Consider the  thick  torus $X=S^1{\times} S^1 {\times} I$ and locate
its  exterior meridian\break  $\mu=S^1{\times} \{\ast\}{\times} \{ 1\}$ and
 interior  longitude  $\lambda= \{\ast\}{\times} S^1{\times} \{ 0\}$.
Then we  attach to $X$   two copies $Q',Q''$ of $Q$ as follows.
The first copy $Q'$ is attached to $X$ by identifying an annular
regular neighborhood $N(a)$ of $a$ in $\partial Q$ with an annular
regular neighborhood $N(\mu)$ of  $\mu$ in $\partial X$. The
second copy $Q''$  is attached by identifying $N(b)$ with
$N(\lambda)$. Denote by $M$ the resulting manifold $Q'\cup X\cup
Q''$.

Since $Q$ is hyperbolic,  $M$ contains only two incompressible
boundary incompressible annuli $A$ and $B$, where $A$ is the
common image of $N(a)$ and $N(\mu)$, and  $B$ is the common image
of $N(b)$ and $N(\lambda)$.  It is easy to see that   reduction of
$M$ along $A$ gives us a disjoint union of a punctured $Q'_{p,q}$
and a punctured $Q''$ while the reduction along $B$ leaves us with
a punctured $Q'$ and a punctured $Q''_{m,n}$. After filling the
punctures (by reductions along spheres surrounding them), we get
two different  manifolds, homeomorphic to $Q _{p,q}\cup Q$ and
$Q_{m,n}\cup Q$. Since their connected components (ie $Q_{p,q},
Q_{m,n}, Q$) are hyperbolic, they are irreducible, boundary
irreducible and contain no essential annuli. Hence   $Q _{p,q}\cup
Q$ and $Q_{m,n}\cup Q$
 are different roots of $M$.
\end{Examplenn}

\section{Other roots}

 {\bf Roots of cobordisms}\qua  Recall that a {\em 3--cobordism} is a triple
 $(M,\partial_-M,\partial_+M)$, where $M$ is a compact 3--manifold and
 $\partial_-M$, $\partial_+M$ are unions of connected components of $\partial M$
 such that
 $\partial_-M\cap \partial_+M=
 \emptyset$ and $\partial_-M\cup \partial_+M=\partial M$. One can
 define $S$-- and $D$--reductions on cobordisms just in the same way as
 for manifolds. The $A$--reduction on cobordisms differs from
 the one for manifolds only in that one boundary circle of $A$
     must lie in $\partial_-M$ while the other   in $\partial_+M$.

\begin{theorem} \label{th:rootsco} For any compact 3--cobordism
$(M,\partial_-M,\partial_+M)$    its
 root   exists and is unique up to homeomorphism of cobordisms and
removing disjoint 3--spheres and balls.
\end{theorem}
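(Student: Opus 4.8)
The plan is to repeat, in the cobordism setting, the three--step scheme used throughout the paper: build an oriented graph $\Gamma$, verify properties (CF) and (EE), and apply \fullref{main}. I would take the vertices of $\Gamma$ to be compact $3$--cobordisms $(M,\partial_-M,\partial_+M)$, considered up to homeomorphism of cobordisms and removal of components homeomorphic to $S^3$ or $D^3$, and join $(M_1,\partial_-M_1,\partial_+M_1)$ to $(M_2,\partial_-M_2,\partial_+M_2)$ by an oriented edge whenever the second is obtained from the first by a non-trivial $SDA$--reduction, where now an annular reduction is allowed only along an annulus with one boundary circle in $\partial_-M$ and the other in $\partial_+M$. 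The theorem then amounts to the assertion that every vertex of $\Gamma$ has a unique root.

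For (CF) I would observe that every cobordism $SDA$--reduction is in particular a manifold $SDA$--reduction in the sense of \fullref{lm:all equiv}: a spherical or disc reduction does not see the splitting $\partial M=\partial_-M\sqcup\partial_+M$ at all, and a cobordism annulus has its two boundary circles in $\partial_-M$ and $\partial_+M$, which are disjoint unions of components of $\partial M$, so those circles automatically lie in different components of $\partial M$ as required there. Hence the complexity function $c(M)=(g^{(2)}(\partial M),s(M))$ introduced in the proof of \fullref{lm:all equiv} still strictly decreases along every edge of $\Gamma$ (its value depends only on the underlying manifold), and (CF) follows at once.

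The real content is (EE), which I would prove by imitating the proof of property (EE) in \fullref{lm:all equiv}, by induction on $m=\#(F_e\cap F_d)$ for the two surfaces $F_e,F_d$ associated to edges with a common initial vertex. The base $m=0$ goes through essentially verbatim: reductions along disjoint surfaces commute and yield a common $N$, the trivial sphere and disc cases are handled exactly as in \fullref{edgequiv} and \fullref{edgequivD}, and a trivial (compressible) annulus is disposed of by \fullref{anntrick}, which replaces the annular reduction by spherical and disc reductions, both of which respect the cobordism structure. In the inductive step one again removes trivial circles and outermost arcs, reducing to the case where $F_e,F_d$ are annuli meeting in non-trivial circles or radial arcs, and one runs through the same Cases 1--4, producing in each an auxiliary surface $X$ (an annulus, sphere, or disc) disjoint from one of $F_e,F_d$ and meeting the other in fewer curves, so that $e\sim x\sim d$.

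The main obstacle is precisely the bookkeeping that makes each auxiliary surface a legitimate cobordism surface, i.e.\ that whenever $X$ is an annulus it has one boundary circle in $\partial_-M$ and the other in $\partial_+M$. This is always arrangeable because each of $F_e,F_d$ already carries one boundary circle on each side: in the circle case one simply chooses the distinguished components so that $A\subset\partial_-M$ contains a boundary circle of $F_e$ and $B\subset\partial_+M$ contains a boundary circle of $F_d$ (these are automatically distinct since $\partial_-M\cap\partial_+M=\emptyset$), whence $X=F_e'\cup F_d'$ inherits one boundary circle on each side; in Cases 1--3 the boundary circles of $X$ are spliced from boundary arcs of $F_e$ and $F_d$ at the endpoints of the radial segments, and since those endpoints lie in $\partial_-M$ and $\partial_+M$ respectively, the arcs glue up side--consistently, so each boundary circle of $X$ stays within a single side; and Case 4, where $M=S^1{\times}S^1{\times}I$, is unchanged. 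Once this routine verification is carried out, (EE) holds, and the existence and uniqueness of the $SDA$--root of a cobordism, up to homeomorphism of cobordisms and removal of $3$--spheres and balls, follows from \fullref{main}.
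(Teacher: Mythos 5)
Your proposal is correct and takes essentially the same route as the paper: the paper's entire proof of this theorem is the single remark that the argument is identical to that of \fullref{annuli}, i.e.\ one builds $\Gamma$, verifies (CF) and (EE) as in \fullref{lm:all equiv}, and applies \fullref{main}. Your extra bookkeeping --- checking that each auxiliary surface $X$ in Cases 1--4 is again a legitimate cobordism annulus with one boundary circle in $\partial_-M$ and one in $\partial_+M$ --- is precisely the implicit content of that remark, carried out correctly.
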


The proof of this theorem is the same as the proof of \fullref{annuli}.
We point out that considering roots of cobordisms was motivated by the
paper~\cite{Ga} of S~Gadgil, which is interesting although the proof of
his main theorem contains a serious gap. We found the gap after proving
\fullref{th:rootsco},
which clarifies the situation with Gadgil's construction.

{\bf Roots of virtual links}\qua Recall that a virtual link can be defined as
a link   $L\subset F{\times} I$, where $F$ is a closed orientable surface.
Virtual links are considered up to isotopy and destabilization operations,
which,  in our terminology, correspond to   reduction along annuli. Each
annulus must be disjoint to $L$ and have one boundary circle in $F{\times}
\{ 0\}$, the other in $F{\times} \{ 1\}$. We also allow spherical
reductions. The proof of the following theorem is the same as the proof of
\fullref{annuli}.

\begin{theorem} \label{virtu} For any virtual link its root
 exists and is unique up to homeomorphism of cobordisms and
removing disjoint 3--spheres and balls.
\end{theorem}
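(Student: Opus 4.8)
The plan is to reproduce the setting of \fullref{main} exactly as in the earlier sections and then invoke it. First I would introduce an oriented graph $\Gamma$ whose vertices are the virtual links $(F\times I, L)$, considered up to homeomorphism of cobordisms, removal of disjoint $3$--spheres and balls, and removal of trivial $(S^3,L)$--pairs; two vertices are joined by an oriented edge $M_1\to M_2$ whenever $M_2$ is obtained from $M_1$ by a non-trivial allowable reduction. Here an allowable reduction is either a spherical reduction along an essential sphere (incompressible, admissible, and non-trivial with respect to $L$) or a destabilizing annular reduction along an incompressible annulus that is disjoint from $L$ and has one boundary circle in $F\times\{0\}$ and the other in $F\times\{1\}$. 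By the definition recalled before \fullref{annuli}, such annuli are automatically essential, so every annular edge is non-trivial. Once $\Gamma$ is shown to possess properties (CF) and (EE), \fullref{main} furnishes the existence and uniqueness of the root, which is precisely the assertion of the theorem.

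For property (CF) I would build a lexicographic complexity function $c(F\times I,L)=(g(F),s(M,L))\in\N_0^2$, combining the two mechanisms already used in the paper. The first coordinate is the genus of $F$: a destabilization (non-trivial annular reduction) lowers $g(F)$ by one, whereas a spherical reduction leaves the ambient cobordism a product over the same surface and hence preserves $g(F)$. The second coordinate $s(M,L)$ is the maximal length of a sequence of essential spherical reductions of $(M,L)$; this is finite by the analogue of Kneser's Lemma for knotted graphs, \fullref{bound2}, and it strictly decreases under any spherical reduction while being unaffected by the passage to a destabilized cobordism. Thus $c$ is compatible with the orientation of $\Gamma$, giving (CF).

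The real content is property (EE), and here I would follow the proof of \fullref{lm:all equiv} almost verbatim, merging it with the knotted--graph argument of \fullref{edgequivKG}. Given two allowable surfaces $F_e,F_d$ (each an essential sphere or a destabilizing annulus), I isotope them within $(M,L)$ to minimize $\#(F_e\cap F_d)$ and induct on this number. In the base case the two reductions commute, and a common root of the doubly reduced object is a common root of $M_e$ and $M_d$, exactly as in the base cases of \fullref{edgequiv}, \fullref{edgequivD} and \fullref{lm:all equiv}; since the annuli are clean (disjoint from $L$), the annulus--annulus and sphere--annulus interactions are identical to the pure cobordism situation, while two spheres are handled by the knotted--graph base case. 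I note that because destabilizing annuli are incompressible, the compressible--annulus subcases of \fullref{lm:all equiv} that invoke \fullref{anntrick} never occur here. For the inductive step I would first remove trivial intersection circles and arcs by innermost--circle and outermost--arc surgeries; sphere--sphere intersections are then resolved by the compression-and-relabelling trick of \fullref{edgequivKG}, keeping track of the at most three points of $S\cap L$, and annulus--annulus intersections are resolved by Cases 1--4 of \fullref{lm:all equiv}, producing in each case an intermediate essential surface $X$ disjoint from one of $F_e,F_d$ and meeting the other in fewer curves, whence $e\sim x\sim d$.

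The hard part is the bookkeeping at the interface of the two surface types in the inductive step: one must verify that every surgered or recombined surface is still admissible in the combined category, meaning that a spherical summand remains an essential sphere meeting $L$ in at most three points, and that a recombined annulus remains incompressible with its two boundary circles in the two distinct components $F\times\{0\}$ and $F\times\{1\}$. Because the annuli avoid $L$ entirely, the link enters only through the spherical part of the argument, so no genuinely new phenomenon arises beyond those already treated in \fullref{edgequivKG} and \fullref{lm:all equiv}; the argument is therefore word-for-word that of \fullref{annuli} with $L$ carried along passively through the clean annular moves. With (CF) and (EE) established, \fullref{main} yields the unique root and completes the proof.
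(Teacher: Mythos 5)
Your proposal follows essentially the same route as the paper, which proves this theorem simply by declaring its proof to be the same as that of \fullref{annuli}; you have carried out that adaptation explicitly (the graph $\Gamma$, a lexicographic complexity function for (CF), and the intersection-minimizing induction for (EE), merging \fullref{edgequivKG} with \fullref{lm:all equiv}), and the argument is correct. The one small overstatement is your claim that the compressible-annulus subcases invoking \fullref{anntrick} never occur: a destabilizing annulus, though incompressible in $(F\times I,L)$, may become compressible after the reduction along the other surface, which is exactly the situation the base case of \fullref{lm:all equiv} handles, so those subcases should be retained rather than discarded.
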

This theorem is equivalent to the main theorem of Kuperberg~\cite{greg}.

\bibliographystyle{gtart}
\bibliography{link}

\end{document}